\let\etoolboxforlistloop\forlistloop 
\let\forlistloop\etoolboxforlistloop 
\DeclareMathOperator*{\argmax}{arg\,max}
\DeclareMathOperator*{\argmin}{arg\,min}
\renewcommand{\c}[1]{\mathcal{#1}}
\renewcommand{\b}[1]{\mathbb{#1}}
\def \mra{\mathrm{a}} 
\def \mrJ{\mathrm{J}}
\newcommand{\N}{\mathbb{N}}
\newcommand{\R}{\mathbb{R}}
\renewcommand{\P}{\mathbb{P}}
\DeclareMathOperator{\E}{\mathbb{E}}
\newcommand{\1}{\mathbbm{1}}
\newtheorem{theorem}{Theorem}[section]
\newtheorem{remark}[theorem]{Remark}
\newtheorem{assumption}{Assumption}
\crefname{assumption}{Assumption}{Assumptions}
\renewcommand{\theassumption}{\Alph{assumption}}
\newlist{assumptionenum}{enumerate}{1} 
\setlist[assumptionenum]{label=(\theassumption\arabic*)}
\let\eqref\labelcref
\begin{document}

\title{
\textnormal{
Control randomisation approach for policy gradient and application to reinforcement learning in optimal switching
}
}

\author{Robert DENKERT\footnote{Humboldt University of Berlin, Department of Mathematics, denkertr at hu-berlin.de; This author gratefully acknowledges financial support by the Deutsche Forschungsgemeinschaft (DFG, German Research Foundation) -- Project-ID 410208580 -- IRTG2544 (”Stochastic Analysis in Interaction”).} \quad  
Huy\^en PHAM\footnote{LPSM,  Universit\'e Paris Cité and Sorbonne Universit\'e, pham at lpsm.paris; This author  is supported by  the BNP-PAR Chair ``Futures of Quantitative Finance", 
and by FiME, Laboratoire de Finance des March\'es de l'Energie, and the ``Finance and Sustainable Development'' EDF - CACIB Chair} \quad 
Xavier WARIN \footnote{  EDF R\&D and FiME, Laboratoire de Finance des March\'es de l'Energie.  }  
}

\maketitle

\begin{abstract}
\noindent \textbf{Abstract}. 
We propose a comprehensive framework for policy gradient methods tailored to continuous time reinforcement learning.  This is based on the connection between stochastic control problems and  randomised problems, enabling applications across various classes of Markovian  continuous time control problems, beyond diffusion models, including e.g. regular, impulse and optimal stopping/switching problems. By utilizing  change of measure in the control randomisation technique, we derive a new policy gradient representation for these randomised problems, featuring parametrised intensity policies. 
We further develop actor-critic algorithms specifically designed to address  general Markovian stochastic control issues. 
Our framework is demonstrated through its application to optimal switching problems, with two numerical case studies in the energy sector focusing  on real options. 
\end{abstract}

\vspace{5mm}

\noindent \textbf{Key words:} Reinforcement learning in continuous time, policy gradient, control randomization, actor-critic algorithms, optimal switching.

\newpage

\section{Introduction}

The theory of reinforcement learning for continuous time stochastic control has advanced significantly, beginning with the foundational work \cite{wanzarzho20}, and continuing with \cite{jia_policy_2021}, \cite{jia_policy_2021-1} who developed policy gradient methods and actor-critic algorithms, and \cite{jiazhou23} for $q$-learning.  These studies primarily focus on 
regular controls within diffusion processes, employing the Feynman-Kac formula and the partial differential equations (PDE)  representation of the value function to derive gradients of the performance value function with respect to the parameters of the stochastic policy. 

Our research aims to expand the application of these methods beyond diffusion models to a broader range of Markovian control problems, including singular, impulse, and optimal stopping and switching problems. 
To achieve this, we propose a unified framework with a general reformulation in terms of  Markovian randomised problems. This approach to stochastic control problem is commonly referred to as \textit{control randomisation}, initially introduced in \cite{bouchard_stochastic_2009} for optimal switching problems, and further developed in \cite{khamaphazha10} for impulse control, in \cite{kharroubi_feynmankac_2015} for regular controls, and in \cite{fuhpha15} for general non-Markovian stochastic control problems.  The basic idea is to replace the control process $(\alpha_t)_t$ valued in $A$ by a random (uncontrolled) point  process $(I_t)_t$ with marks in $A$, formulate an auxiliary control problem where the intensity distribution of $I$ is controlled, called randomized problem, and show that the value functions of the two problems coincide. The key feature of the randomised problems is its formulation in terms of a family of dominated probability measures under which the optimization is performed. 

Utilizing the change of measure in these randomised settings, we derive a gradient representation of the value function with respect to parametrised intensity policies directly, without reliance on PDEs. This framework not only incorporates Poisson discretisation as per the randomization method but also accommodates standard fixed discretisations for continuous-time problems. Using this policy gradient, we design an Actor-Critic algorithm to alternately learn the value function and the optimal intensity policy. Notably, the gradient structure relies solely on the state at action points, circumventing the need for further discretisation during implementation.

We demonstrate the applicability of our results in a model-free setting, learning optimal control and value functions through empirical observations and samples. This methodology is applied specifically to optimal switching problems but is adaptable to a wide variety of continuous stochastic control scenarios. We provide numerical examples from real options in the energy markets to illustrate these concepts. 

The remainder of the paper is structured as follows: In \cref{section unified approach}, we detail the Markovian randomised problem and develop a corresponding policy gradient method. \cref{section application to stochastic control problems} applies this methodology to a diverse array of continuous time control problems, and \cref{section numerical experiments} presents and evaluates numerical experiments within the context of optimal switching problems.

\section{Policy gradient method for Markovian randomised problems}\label{section unified approach}

In this section, we will consider a general class of Markovian randomised control problems in continuous time. 
Control randomisation method can be seen as a unified approach to a large class of control problems in continuous time, including optimal stopping, switching and impulse control problems, as we will see in \cref{section application to stochastic control problems}.
We will derive first a general policy gradient representation  and then from this, an Actor-Critic algorithm to tackle this class of problems.

\subsection{Theory background}\label{subsection theory background}

\subsubsection{Randomised control problem setup}

Let $(\Omega,\c F,\P)$ be a probability space on which we consider a simple random counting measure $\nu$ on 
$(0,\infty)\times A$ with $A$ some Polish space, such that $\E[\nu((0,T]\times A)]$ $<$ $\infty$, and   associated to the marked point process $(\tau_n,\mra_n)_n$ and the pure jump $A$-valued process $I$ with dynamics
\begin{align} \label{dynI}
d I_s & = \; \int_A (e - I_{s-}) \nu(ds,de), \quad s \leq T.     
\end{align}
We denote by $I^{t,a}$ the jump process starting from $a$ $\in$ $A$, at time $t$ $\in$ $[0,T]$, that is $I_t^{t,a}=a$, and following the dynamics \eqref{dynI} for $t$ $\leq$ $s$ $\leq$ $T$. 
We consider a state process $X$ valued on $\R^d$ s.t. the pair $(X,I)$ is Markov, and $X$ only jumps at the times given by $\nu$. 
An example includes the case where $X$ is driven by a SDE in the form
\begin{align} \label{exdiffX}
d X_s &= \; b(s,X_s,I_s) d s + \sigma(s,X_s,I_s) dW_s + \int_A \gamma(s,X_{s-},I_{s-},e) \nu(ds,de),   
\end{align}
with $W$ a Brownian motion. 
We denote by $X^{t,x,a}$ the state process $X$ that starts from $x$ at time $t$, that is $X_{t}^{t,x,a}$ $=$ $x$, and s.t. $(X^{t,x,a},I^{t,a})$ is Markov, and we assume the estimate 
\begin{align}
\E \Big[ \sup_{t\leq s\leq T} |X_s^{t,x,a}|^p \Big]  & \leq \; C(1 +  |x|^p), \quad \quad \forall x \in \R^d,  
\end{align}
for some positive constant $C$ and $p$ $\in$ $[1,\infty)$.  This estimate is satisfied for $X$ as in \eqref{exdiffX} under standard Lipschitz and linear growth conditions on $b,\sigma,\gamma$.

By \autocite[Theorems 2.1, 2.3, 3.4]{jacod_multivariate_1975}, there exists a unique (up to a $\P$-null set) predictable random measure $\hat\nu$ with $\hat\nu(\{s\}\times A) \leq 1$ for all $s\in (0,T]$ such that for every $\c P(\b F^{X,\nu})\otimes \c B(A)$-measurable random field $H\geq 0$, where $\c P(\b F^{X,\nu})$ denotes the predictable $\sigma$-algebra of $\b F^{X,\nu}$, it holds that
\[
\E\bigg[\int_0^T \int_A H(s,e) \nu(ds,de)\bigg] = \E\bigg[ \int_0^T \int_A H(s,e) \hat\nu(ds,de)\bigg],
\]
called the \emph{predictable projection or compensator of $\nu$}, which is uniquely characterising $\nu$.
Guided by the approach of the randomisation method, we will now optimise over the set of (in a suitable sense) \grqq intensities\grqq{} of the process $I$. To this end, we note that for every $\b F^{X,\nu}\otimes\c B(A)$-predictable, essentially bounded process $\lambda$ satisfying
\begin{enumerate}[label=(\roman*)]
    \item $\int_A \lambda_s(e) \hat\nu(\{s\}, de) \leq 1$ for all $s\in (0,T]$,
    \item for all $s\in (0,T]$ such that $\hat\nu(\{s\}\times A) = 1$, it also holds that $\int_A \lambda_s(e) \hat\nu(\{s\}, de) = 1$,
\end{enumerate}
we can construct a tilted probability measure $\P^\lambda\ll \P$ such that $\nu$ is a random point measure with the predictable projection $\lambda_s(e) \hat\nu(ds,de)$ under $\P^\lambda$. This is achieved through Girsanov's theorem, as outlined in e.g. \autocite[
Theorem 4.5]{jacod_multivariate_1975}, by defining $\P^\lambda$ via its density process
\begin{align}
   Z_s^\lambda \; := \;  \frac{d\P^\lambda}{d\P}\bigg|_{\c F^{X,\nu}_s}
    & = \;  \prod_{t\in (0,s], 0 < \hat\nu(\{t\}\times A) < 1,  \nu(\{t\}\times A) = 0}   \frac{1 - \int_A \lambda_{t}(e) \hat\nu(\{t\}, de)}{1 - 
\hat\nu(\{t\}\times A)}\\
    &\qquad\cdot\exp\bigg(
    \int_{(0,s]}\int_A \log\lambda_t(e) \nu(dt,de)
    - \int_{(0,s]}\int_A (\lambda_t(e) - 1) \hat\nu^c(dt,de)\bigg),
    \label{eq girsanov formula}
\end{align}
for $s$ $\in$ $(0,T]$, where $\hat\nu^c(ds,de)\coloneqq \1_{\{\hat\nu(\{s\}\times A) = 0\}} \hat\nu(ds, de)$.

Notice that we do not assume necessarily that the compensator is absolutely continuous w.r.t. the Lebesgue measure $ds$, in order to take into account the possibility of jumps at deterministic times, hence to embed the case of stochastic control on discrete time, i.e. Markov decision process.

By the Markovian structure of our problem, we now define the set of admissible control $\c V$ as all such processes $\lambda$ satisfying the above conditions while being of the form
\[
\lambda_s(e) = \lambda(e | s, X^{}_{s-}, I^{}_{s-}),\qquad s\leq T,
\]
for some bounded deterministic function $\lambda$ on $A\times [0,T]\times\R^d\times A$.
For the ease of arguments, we furthermore require that all $\lambda\in\c V$ satisfy the following conditions which ensure that also $\P\ll\P^\lambda$ and thus $\P^\lambda\sim\P$,
\footnote{Since every $\lambda$ with $\P^\lambda\ll\P$ can be approximated by $(\lambda^n)_n\subseteq\c V$, this additional assumption also does not change the value function. Similar arguments are standard for randomised control problems.}
\begin{enumerate}[label=(\roman*)]
    \item $\lambda$ is bounded away from 0, that is $\inf_{(s,x,a,e)} \lambda(e|s,x,a) > 0$,
    \item there exists a constant $C < 1$ such that for all $s\in (0,T]$, when $\hat\nu(\{s\}\times A) < 1$, then it also holds that $\int_A \lambda(e | s, X^{}_{s-}, I^{}_{s-}) \hat\nu(\{s\},de) \leq C < 1$.
\end{enumerate}

Note that for each such $\lambda\in\c V$, the process $(X,I)$ will still be Markovian under $\P^\lambda\ll \P$, and we have the estimate
\begin{align}
\E^\lambda \Big[ \sup_{t\leq s\leq T} |X_s^{t,x,a}|^p \Big]  & \leq \; C_\lambda(1 +  |x|^p), \quad \quad \forall x \in \R^d,  
\end{align}
where $\E^\lambda$ denotes the expectation under $\P^\lambda$. 
In general, our objective is now to optimise the reward functional: 
\begin{align}
    J(t,x,a,\lambda) & \coloneqq \;  \E^\lambda\bigg[g(X^{t,x,a}_T,I^{t,a}_T) + \int_t^T f(s,X^{t,x,a}_s,I^{t,a}_s) ds - \int_{(t,T]}\int_A c(s,X^{t,x,a}_{s-},I^{t,a}_{s-},e) \nu(ds,de) \bigg],
\end{align}
for $(t,x,a)$ $\in$ $[0,T]\times\R^d\times A$, where the reward functions $f$, $g$ and the cost function $c$ are assumed to satisfy the polynomial growth condition
\begin{align}
|f(t,x,a)| + |g(x,a)| + |c(t,x,a,e)|    & \leq C (1 + |x|^p), 
\end{align}
for all $t$ $\in$ $[0,T]$, $x$ $\in$ $\R^d$, $a,e$ $\in$ $A$.  Notice that the reward functional $J$ then also satisfies the 
the polynomial growth condition
\begin{align}
|J(t,x,a,\lambda)|  & \leq C_\lambda (1 + |x|^p).  
\end{align}

\begin{remark}\label{remark markov property of reward process}
From the definition of the reward functional $J$ and the Markov property of $(X,I)$, we have the martingale property under $\P^\lambda$, $\lambda$ $\in$ $\c V$,  of the process 
\begin{align}
J(s,X_s,I_s,\lambda) + \int_0^s f(r,X_r,I_r) dr - \int_{(0,s]}\int_A c(r,X_{r-},I_{r-},e) \nu(dr,de),    \quad 0 \leq s \leq T.  
\end{align}
\end{remark}

\subsubsection{Policy gradient representation}

The policy gradient method aims to optimize the expected reward $J$ by exploring a parameterized family $(\lambda^\theta)_{\theta\in\Theta}\subseteq \c V$. 
This family is chosen to be sufficiently dense in $\c V$, meaning that
\[
\sup_{\lambda \in \c V} J(t,x,a,\lambda) = \sup_{\theta\in\Theta} J(t,x,a,\theta),
\]
where we denote by $J(t,x,a,\theta) \coloneqq J(t,x,a,\lambda^\theta)$ with a slight abuse of notation.  
The optimization process then involves computing the gradient of $J^\theta$ with respect to the parameter $\theta$, allowing for updates to the policy parameters -- typically done through methods like gradient descent -- to maximize the overall reward.

Our aim in this section is now  to derive an explicit formula for the gradient $\triangledown_\theta J^\theta(t,x,a,\theta)$. While the approach by \autocite{jia_policy_2021} is based on the Feynman-Kac formula for $J^\theta$, we will instead use the Girsanov formula \eqref{eq girsanov formula}. 
The advantage is that we do not need to assume or impose conditions for ensuring regularity on the functional $J$ 
for deriving the partial differential equations that it satisfies in the continuous-time framework. 
This is crucial since the function $J$ may be discontinuous in time in the case where $\hat\nu$ admits atoms in time, and then PDE method cannot be applied.  

We shall assume that for all $(t,x,a,e)$ $\in$ $[0,T]\times\R^d\times A\times A$, the map $\theta$ $\in$ $\Theta$ $\mapsto$ 
$\lambda^\theta(e|t,x,a)$ is differentiable with a derivative satisfying the growth condition: for each $\theta\in\Theta$, 
there exists some positive constant $C_\theta$ s.t. 
\begin{align}
\int_{(t,T]} \int_A \big| \nabla_\theta \lambda^\theta(e|s,x,a) \big| \hat\nu(ds,de) & \leq \; C_\theta(1 + |x|),     
\quad (t,x) \in [0,T]\times\R^d.
\end{align}

\begin{theorem}
We have 
\begin{align}
    \nabla_\theta J(t,x,a,\theta)
    &=\E^\theta\bigg[\int_{(t,T]}\int_A \triangledown_\theta (\log\lambda^\theta)(e|s,X^{t,x,a}_{s-},I^{t,a}_{s-})\\
    &\qquad \quad  \cdot \Big(J(s,X^{t,x,a}_s,e,\theta) - J(s,X^{t,x,a}_{s-},I^{t,a}_{s-},\theta) - c(s,X^{t,x,a}_{s-},I^{t,a}_{s-},e)\Big) \nu(ds,de)\bigg],\label{eq policy gradient equation}
\end{align}
for $(t,x,a)$ $\in$ $[0,T]\times\R^d\times A$, where $\E^\theta$ denotes the expectation under 
$\P^\theta$ $=$ $\P^{\lambda^\theta}$. 
\end{theorem}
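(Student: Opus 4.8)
The plan is to exploit that, once we pass to the fixed reference measure $\P$, all of the $\theta$-dependence of the reward is carried by the Girsanov density alone, so that differentiation reduces to differentiating \eqref{eq girsanov formula} rather than appealing to any PDE. Writing $Z^\theta$ for the density $Z^{\lambda^\theta}$ of $\P^\theta$ w.r.t. $\P$ on $(t,T]$ and $\Phi$ for the ($\theta$-independent) pathwise payoff $g(X_T^{t,x,a},I_T^{t,a}) + \int_t^T f\,ds - \int_{(t,T]}\int_A c\,\nu(ds,de)$, I would first record that $J(t,x,a,\theta) = \E^\theta[\Phi] = \E[Z_T^\theta\,\Phi]$. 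Since under $\P$ neither the law of $(X,I)$ nor $\Phi$ depends on $\theta$, differentiating under the expectation and using $\nabla_\theta Z^\theta = Z^\theta\,\nabla_\theta\log Z^\theta$ yields the score-function representation $\nabla_\theta J(t,x,a,\theta) = \E^\theta[\Phi\,\nabla_\theta\log Z_T^\theta]$. Justifying the interchange of $\nabla_\theta$ and $\E$ is the first technical point, to be handled by dominated convergence using the stated growth bound on $\nabla_\theta\lambda^\theta$, the polynomial growth of $f,g,c$, and the equivalence $\P^\theta\sim\P$ together with the essential boundedness and the lower bound on $\lambda^\theta$.

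Next I would set $G_s := \nabla_\theta\log Z_s^\theta$ and $M_s := J(s,X_s,I_s,\theta) + \int_t^s f\,dr - \int_{(t,s]}\int_A c\,\nu(dr,de)$. By \cref{remark markov property of reward process}, $M$ is a $\P^\theta$-martingale with $M_t = J(t,x,a,\theta)$ and $M_T = \Phi$, while $G$ is a $\P^\theta$-martingale with $G_t = 0$ (being the score process; equivalently $Z^\theta G$ is a $\P$-martingale, which one sees by differentiating $\E[Z_T^\theta]=1$). Applying integration by parts to these two martingales and taking $\P^\theta$-expectations, the stochastic-integral terms vanish and $M_t G_t = 0$, leaving $\nabla_\theta J(t,x,a,\theta) = \E^\theta[M_T G_T] = \E^\theta\big[[M,G]_T\big]$.

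It then remains to identify the covariation. Reading off $G$ from \eqref{eq girsanov formula}, one checks that $G$ has no continuous martingale part, so $[M,G]_T = \sum_{t<s\leq T}\Delta M_s\,\Delta G_s$, and that $G$ jumps only at (i) jump times of $\nu$, where $\Delta G_s = \nabla_\theta(\log\lambda^\theta)(e|s,X_{s-},I_{s-})$ with $e$ the realized mark, and (ii) atoms of $\hat\nu$ in time at which no jump of $\nu$ occurs, where $\Delta G_s = -\big(\int_A\nabla_\theta\lambda_s^\theta\,\hat\nu(\{s\},de)\big)\big/\big(1-\int_A\lambda_s^\theta\,\hat\nu(\{s\},de)\big)$, coming from the product term in \eqref{eq girsanov formula}. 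At a jump time with mark $e$ one has $\Delta M_s = J(s,X_s,e,\theta) - J(s-,X_{s-},I_{s-},\theta) - c(s,X_{s-},I_{s-},e)$, where $J(s-,\cdot)$ denotes the left limit in time.

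The main obstacle is precisely this atom-time bookkeeping, which is exactly what obstructs the PDE route: because $\hat\nu$ may charge deterministic times, $J$ can be discontinuous in time, and both $M$ and $G$ jump at such predictable times even when $\nu$ does not. I would therefore group, at each atom $s$, the jump and no-jump branches and take conditional expectations given $\c F_{s-}$, using that the $\P^\theta$-compensator of $\nu$ is $\lambda^\theta\hat\nu$. The no-jump contribution equals $-\Delta_s J\cdot\int_A\nabla_\theta\lambda_s^\theta\,\hat\nu(\{s\},de)$, with $\Delta_s J := J(s,X_{s-},I_{s-},\theta)-J(s-,X_{s-},I_{s-},\theta)$ the time-jump of $J$, and this is exactly what is needed to convert the left limit $J(s-,X_{s-},I_{s-},\theta)$ appearing in $\Delta M_s$ into the right-continuous value $J(s,X_{s-},I_{s-},\theta)$. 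After this cancellation, every atom and every $\hat\nu$-continuous jump contributes the same integrand $\nabla_\theta(\log\lambda^\theta)(e|s,X_{s-},I_{s-})\big(J(s,X_s,e,\theta)-J(s,X_{s-},I_{s-},\theta)-c\big)$, and rewriting the sum as an integral against $\nu$ gives \eqref{eq policy gradient equation}. The remaining care concerns integrability of $[M,G]_T$ and of the stochastic integrals discarded in the integration by parts, which again follows from the growth conditions and $\P^\theta\sim\P$.
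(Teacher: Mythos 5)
Your proposal is correct in its overall architecture and takes a genuinely different route through the middle of the argument. The opening step---passing to the reference measure, differentiating the Girsanov density \eqref{eq girsanov formula}, and obtaining $\nabla_\theta J = \E^\theta[\Phi\,\nabla_\theta\log Z_T^\theta]$---coincides with the paper's. From there, however, the paper never touches left limits of $J$ in time: it uses the martingale property of $M$ only to project $M_T$ back to $M_s$ inside each term of $\nabla_\theta \log Z_T^\theta$, introduces the \emph{predictable} comparison process $N_s = J(s,X_{s-},I_{s-},\theta)+\dots$ (time argument $s$, state arguments at left limits), and finishes with compensator identities, so that the difference $M_s - N_s$ directly produces the integrand of \eqref{eq policy gradient equation}. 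You instead treat $G = \nabla_\theta\log Z^\theta$ as a second $\P^\theta$-martingale, integrate by parts to get $\nabla_\theta J = \E^\theta\big[[M,G]_T\big]$, and compute the covariation jump by jump. This dual-martingale formulation is clean and arguably more conceptual---it explains structurally why only the jump times of $\nu$ and the time-atoms of $\hat\nu$ can contribute---but it buys this at the price of having to identify the path left limit $M_{s-}$, which is exactly what the paper's $N$-device is engineered to avoid.

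Two points in your jump bookkeeping need more than you give them. First, \emph{full atoms}: at times with $\hat\nu(\{s\}\times A) = 1$, admissibility forces $\int_A \lambda^\theta_s(e)\,\hat\nu(\{s\},de) = 1$, so a jump occurs $\P^\theta$-a.s.\ and there is no no-jump branch; your cancellation mechanism---the no-jump contribution $-\Delta_s J\int_A \nabla_\theta\lambda^\theta\,\hat\nu(\{s\},de)$ offsetting the $+\Delta_s J$ piece of the jump branch---has nothing to act on, and your argument as written leaves the term $\Delta_s J\int_A \nabla_\theta\lambda^\theta\,\hat\nu(\{s\},de)$ uncancelled. It does vanish, but only because the constraint holds for \emph{all} $\theta$, whence $\int_A \nabla_\theta\lambda^\theta\,\hat\nu(\{s\},de) = \nabla_\theta(1) = 0$; this is precisely the paper's separate step \eqref{eq dtheta J fourth term}, and since full atoms are the case relevant to the deterministic-grid/MDP embedding, it cannot be skipped. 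Second, at $\hat\nu$-continuous jump times you silently replace the path left limit $\lim_{r\uparrow s} J(r,X_r,I_r,\theta)$ by $J(s,X_{s-},I_{s-},\theta)$; no time-regularity of $J$ is assumed, so this requires an argument. It can be repaired inside your framework: after compensating the $\nu$-integral (the integrand is predictable), the discrepancy is an integral of $N_s - M_{s-}$ against $\lambda^\theta\hat\nu^c$, and since the c\`adl\`ag martingale $M$ has at most countably many discontinuities and agrees with $N$ off the ($\P$-a.s.\ finitely many) jump times of $\nu$, the set $\{s : M_{s-}\neq N_s\}$ is a.s.\ countable, hence null for the atomless measure $\hat\nu^c$---the same countability argument the paper uses to prove $M = N$, $\P\otimes\hat\nu^c$-a.e. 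With these two patches (and the integrability checks you defer, which are at the same level of implicitness as in the paper), your proof is complete.
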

\begin{proof}
From Bayes formula with \eqref{eq girsanov formula}, the reward functional is formulated in term of the reference probability measure $\P$ instead of $\P^\theta\coloneqq \P^{\lambda^\theta}$ as follows for $\theta\in\Theta$,
\begin{align}
    & \quad J(t,x,a,\theta)\\
    = & \;  \E^\theta\bigg[g(X^{t,x,a}_T,I^{t,a}_T) + \int_t^T f(s,X^{t,x,a}_s,I^{t,a}_s) ds - \int_{(t,T]}\int_A c(s,X^{t,x,a}_{s-},I^{t,a}_{s-},e) \nu(ds,de) \bigg]\\
    = & \;  \E\bigg[ Z_{T}^{t,x,a,\theta}  
    \bigg(g(X^{t,x,a}_T,I^{t,a}_T) + \int_t^T f(s,X^{t,x,a}_s,I^{t,a}_s) ds - \int_{(t,T]}\int_A c(s,X^{t,x,a}_{s-},I^{t,a}_{s-},e) \nu(ds,de)\bigg) \bigg],
\end{align}
where 
\begin{align}
Z_{T}^{t,x,a,\theta} & = \;  
\exp \bigg( \int_{(t,T]}\int_A \log\lambda^\theta(e|s,X_{s-}^{t,x,a},I_{s-}^{t,a}) \nu(ds,de)  
- \int_{(t,T]}\int_A (\lambda^\theta(e|s,X_{s-}^{t,x,a},I_{s-}^{t,a}) - 1) \hat\nu^c(ds,de) \\
& \quad \quad \quad  + \sum_{s\in (t,T], 0 < \hat\nu(\{s\}\times A) < 1} \1_{\{\nu(\{s\}\times A) = 0\}}  
 \log\Big( \frac{1 - \int_A \lambda^\theta(e|s,X^{t,x,a}_{s-},I^{t,a}_{s-}) \hat\nu(\{s\}, de)}
 {1 - \hat\nu(\{s\}\times A)}   \Big) \bigg).  
\end{align} 
By differentiating this relation w.r.t. $\theta$, and writing $\nabla_\theta Z_T^{t,x,a,\theta}$ $=$ 
$Z_T^{t,x,a,\theta} L_T^{t,x,a,\theta}$ with
\begin{align}
L_T^{t,x,a,\theta} &= \;  
\int_{(t,T]}\int_A \triangledown_\theta (\log\lambda^\theta)(e|s,X^{t,x,a}_{s-},I^{t,a}_{s-}) \nu(ds,de) - \int_{(t,T]} \int_A \triangledown_\theta \lambda^\theta(e|s,X^{t,x,a}_{s-},I^{t,a}_{s-}) \hat\nu^c(ds,de)  \\
& \quad \quad  - \; \sum_{s\in (t,T], 0 < \hat\nu(\{s\}\times A) < 1} \1_{\{\nu(\{s\}\times A) = 0\}} 
\frac{\int_A \triangledown_\theta \lambda^\theta(e|s,X^{t,x,a}_{s-},I^{t,a}_{s-}) \hat\nu(\{s\}, de)}{1-\int_A \lambda^\theta(e|s,X^{t,x,a}_{s-},I^{t,a}_{s-}) \hat\nu(\{s\}, de)},  
\end{align}
we get 
\begin{align}
& \quad  \nabla_\theta J(t,x,a,\theta) \\
= & \E^\theta\bigg[ L_T^{t,x,a,\theta} \Big( g(X^{t,x,a}_T,I^{t,a}_T) + \int_t^T f(s,X^{t,x,a}_s,I^{t,a}_s) ds - \int_{(t,T]}\int_A c(s,X^{t,x,a}_{s-},I^{t,a}_{s-},e) \nu(ds,de) \Big)  \bigg]
\end{align}
To simplify this expression, we will use that due the Markovian structure of our problem, the process $M^{t,x,a,\theta}$ given by
\[
M_s^{t,x,a,\theta} \coloneqq J(s,X^{t,x,a}_s,I^{t,a}_s,\theta) + \int_t^s f(r,X^{t,x,a}_r,I^{t,a}_r) dr - \int_{(t,s]}\int_A c(r,X^{t,x,a}_{r-},I^{t,a}_{r-},e)\nu(dr,de),\quad s\in [t,T],
\]
is a $\P^\theta$-martingale, see also \cref{remark markov property of reward process}. We start by noting that $J(T,X^{t,x,a}_T,I^{t,a}_T,\theta) = g(X^{t,x,a}_T,I^{t,a}_T)$, which allows us to write $\nabla_\theta J(t,x,a,\theta)$ using $M^{t,x,a,\theta}$ as follows
\begin{align}
\nabla_\theta J(t,x,a,\theta) &= \E^\theta[L^{t,x,a,\theta}_T M^{t,x,a,\theta}_T]\\
&= \E^\theta\bigg[
\int_{(t,T]}\int_A \triangledown_\theta (\log\lambda^\theta)(e|s,X^{t,x,a}_{s-},I^{t,a}_{s-}) M^{t,x,a,\theta}_T \nu(ds,de)\\
&\qquad \;  - \;  \int_{(t,T]} \int_A \triangledown_\theta \lambda^\theta(e|s,X^{t,x,a}_{s-},I^{t,a}_{s-}) M^{t,x,a,\theta}_T \hat\nu^c(ds,de)  \\
& \quad \quad \;  -  \;  \sum_{s\in (t,T], 0 < \hat\nu(\{s\}\times A) < 1} \1_{\{\nu(\{s\}\times A) = 0\}} 
\frac{\int_A \triangledown_\theta \lambda^\theta(e|s,X^{t,x,a}_{s-},I^{t,a}_{s-}) \hat\nu(\{s\}, de)}{1-\int_A \lambda^\theta(e|s,X^{t,x,a}_{s-},I^{t,a}_{s-}) \hat\nu(\{s\}, de)} M^{t,x,a,\theta}_T
\bigg].
\end{align}
Now using the $\P^\theta$-martingale property of $M^{t,x,a,\theta}$, we obtain
\begin{align}
\nabla_\theta J(t,x,a,\theta)
&= \E^\theta\bigg[
\int_{(t,T]}\int_A \triangledown_\theta (\log\lambda^\theta)(e|s,X^{t,x,a}_{s-},I^{t,a}_{s-}) M^{t,x,a,\theta}_s \nu(ds,de)\\
&\qquad \;  - \;  \int_{(t,T]} \int_A \triangledown_\theta \lambda^\theta(e|s,X^{t,x,a}_{s-},I^{t,a}_{s-}) M^{t,x,a,\theta}_s \hat\nu^c(ds,de)  \\
& \quad \quad \;  -  \;  \sum_{s\in (t,T], 0 < \hat\nu(\{s\}\times A) < 1} \1_{\{\nu(\{s\}\times A) = 0\}} 
\frac{\int_A \triangledown_\theta \lambda^\theta(e|s,X^{t,x,a}_{s-},I^{t,a}_{s-}) \hat\nu(\{s\}, de)}{1-\int_A \lambda^\theta(e|s,X^{t,x,a}_{s-},I^{t,a}_{s-}) \hat\nu(\{s\}, de)} M^{t,x,a,\theta}_s
\bigg].
\label{eq dtheta J step 2}
\end{align}

To simplify the notation in the following arguments, let us introduce the predictable process
\[
N_s^{t,x,a,\theta} \coloneqq J(s,X^{t,x,a}_{s-},I^{t,a}_{s-},\theta) + \int_t^s f(r,X^{t,x,a}_r,I^{t,a}_r) dr - \int_{(t,s)}\int_A c(r,X^{t,x,a}_{r-},I^{t,a}_{r-},e)\nu(dr,de),\quad s\in [t,T]. 
\]
We note that $\nu$ and thus also $I^{t,a}$ and $X^{t,x,a}$ have $\P$-a.s.\@ only finitely many discontinuities on $[t,T]$. This implies that $\{s \in [t,T] | M^{t,x,a,\theta}_s \not= N^{t,x,a,\theta}_s\}$ is $\P$-a.s.\@ countable and thus $M^{t,x,a,\theta}_s = N^{t,x,a,\theta}_s$, $\P\otimes\hat\nu^c(\cdot,A)$-a.s., which allows us to rewrite the second term in \eqref{eq dtheta J step 2} as
\begin{align}
    &\E^\theta\bigg[\int_{(t,T]}\int_A \triangledown_\theta \lambda^\theta(e|s,X^{t,x,a}_{s-},I^{t,a}_{s-}) M^{t,x,a,\theta}_s \hat\nu^c(ds,de)\bigg]\\
    &=\E^\theta\bigg[\int_{(t,T]}\int_A \triangledown_\theta \lambda^\theta(e|s,X^{t,x,a}_{s-},I^{t,a}_{s-})  N^{t,x,a,\theta}_s \hat\nu^c(ds,de)\bigg].\label{eq dtheta J second term}
\end{align}
For the last term in \eqref{eq dtheta J step 2}, focusing on the not-almost-sure jumps of $\nu$, and using that $\nu(\{s\}\times A) = 0$ implies that $I^{t,a}_{s-} = I^{t,a}_s$ and then by assumption also $X^{t,x,a}_{s-} = X^{t,x,a}_s$, which implies that $M^{t,x,a,\theta}_s = N^{t,x,a,\theta}_s$, we obtain
\begin{align}
    &\E^\theta\bigg[\sum_{s\in (t,T], 0 < \hat\nu(\{s\}\times A) < 1} \1_{\{\nu(\{s\}\times A) = 0\}} 
    \frac{\int_A \triangledown_\theta \lambda^\theta(e|s,X^{t,x,a}_{s-},I^{t,a}_{s-}) \hat\nu(\{s\}, de)}
    {1-\int_A \lambda^\theta(e|s,X^{t,x,a}_{s-},I^{t,a}_{s-}) \hat\nu(\{s\}, de)}
    M^{t,x,a,\theta}_s\bigg]\\
    %
    %
    &= \E^\theta\bigg[\sum_{s\in (t,T], 0 < \hat\nu(\{s\}\times A) < 1} (1 - \1_{\{\nu(\{s\}\times A) > 0\}})
    \frac{\int_A \triangledown_\theta \lambda^\theta(e|s,X^{t,x,a}_{s-},I^{t,a}_{s-}) \hat\nu(\{s\}, de)}
    {1-\int_A \lambda^\theta(e|s,X^{t,x,a}_{s-},I^{t,a}_{s-}) \hat\nu(\{s\}, de)}
    N^{t,x,a,\theta}_s\bigg].\label{eq dtheta J third term first rewrite}
\end{align}
To simply this term, we note that since $\nu$ is a simple random counting measure
\begin{align}
    &\E^\theta\bigg[\sum_{s\in (t,T], 0 < \hat\nu(\{s\}\times A) < 1} \1_{\{\nu(\{s\}\times A) > 0\}}
    \frac{\int_A \triangledown_\theta \lambda^\theta(e|s,X^{t,x,a}_{s-},I^{t,a}_{s-}) \hat\nu(\{s\}, de)}
    {1-\int_A \lambda^\theta(e|s,X^{t,x,a}_{s-},I^{t,a}_{s-}) \hat\nu(\{s\}, de)}
    N^{t,x,a,\theta}_s\bigg]\\
    &= \E^\theta\bigg[\int_{(t,T]}\int_A \1_{\{0 < \hat\nu(\{s\}\times A) < 1\}}
    \frac{\int_A \triangledown_\theta \lambda^\theta(e|s,X^{t,x,a}_{s-},I^{t,a}_{s-}) \hat\nu(\{s\}, de)}
    {1-\int_A \lambda^\theta(e|s,X^{t,x,a}_{s-},I^{t,a}_{s-}) \hat\nu(\{s\}, de)}
    N^{t,x,a,\theta}_s\nu(\{s\}, du)\bigg]\\
    &= \E^\theta\bigg[\sum_{s\in (t,T], 0 < \hat\nu(\{s\}\times A) < 1}
    \frac{\int_A \triangledown_\theta \lambda^\theta(e|s,X^{t,x,a}_{s-},I^{t,a}_{s-}) \hat\nu(\{s\}, de)}
    {1-\int_A \lambda^\theta(e|s,X^{t,x,a}_{s-},I^{t,a}_{s-}) \hat\nu(\{s\}, de)}
    N^{t,x,a,\theta}_s\int_A \lambda^\theta(u|s,X^{t,x,a}_{s-},I^{t,a}_{s-})\hat\nu(\{s\}, du)\bigg],
\end{align}
using that $N^{t,x,a,\theta}$ is by construction predictable. Therefore, we can rewrite \eqref{eq dtheta J third term first rewrite} as
\begin{align}
    &\E^\theta\bigg[\sum_{s\in (t,T], 0 < \hat\nu(\{s\}\times A) < 1} \1_{\{\nu(\{s\}\times A) = 0\}}
    \frac{\int_A \triangledown_\theta \lambda^\theta(e|s,X^{t,x,a}_{s-},I^{t,a}_{s-}) \hat\nu(\{s\}, de)}
    {1-\int_A \lambda^\theta(e|s,X^{t,x,a}_{s-},I^{t,a}_{s-}) \hat\nu(\{s\}, de)}
    M^{t,x,a,\theta}_s\bigg]\\
    &= \E^\theta\bigg[\sum_{s\in (t,T], 0 < \hat\nu(\{s\}\times A) < 1} \int_A \triangledown_\theta \lambda^\theta(e|s,X^{t,x,a}_{s-},I^{t,a}_{s-}) N^{t,x,a,\theta}_s\hat\nu(\{s\}, de)\bigg].\label{eq dtheta J third term}
\end{align}
To continue, we need an auxiliary result, for which we will take a closer look at the times where $\hat\nu(\{s\}\times A) = 1$.
Since $\lambda^\theta$ is an admissible control, this implies for such time points that $\int_A \lambda^\theta(e|s,X^{t,x,a}_{s-},I^{t,a}_{s-}) \hat\nu(\{s\},de) = 1$ for all $\theta\in\Theta$ and thus
\[
\int_A \triangledown_\theta \lambda^\theta(e|s,X^{t,x,a}_{s-},I^{t,a}_{s-}) \hat\nu(\{s\}, de) = \triangledown_\theta \bigg(\int_A \lambda^\theta(e|s,X^{t,x,a}_{s-},I^{t,a}_{s-}) \hat\nu(\{s\},de)\bigg) = 0.
\]
This leads us to
\begin{align}
    \E^\theta\bigg[\sum_{s\in (t,T], \hat\nu(\{s\}\times A) = 1} \int_A N^{t,x,a,\theta}_s \triangledown_\theta \lambda^\theta(e|s,X^{t,x,a}_{s-},I^{t,a}_{s-}) \hat\nu(\{s\}, de)\bigg] &= 0.\label{eq dtheta J fourth term}
\end{align}

Thus, putting \eqref{eq dtheta J fourth term,eq dtheta J second term,eq dtheta J third term} together, we obtain that
\begin{align}
    & \E^\theta\bigg[\int_{(t,T]} \int_A \triangledown_\theta \lambda^\theta(e|s,X^{t,x,a}_{s-},I^{t,a}_{s-}) M^{t,x,a,\theta}_s \hat\nu^c(ds,de)  \\
    & \qquad \;  +  \;  \sum_{s\in (t,T], 0 < \hat\nu(\{s\}\times A) < 1} \1_{\{\nu(\{s\}\times A) = 0\}} 
    \frac{\int_A \triangledown_\theta \lambda^\theta(e|s,X^{t,x,a}_{s-},I^{t,a}_{s-}) \hat\nu(\{s\}, de)}{1-\int_A \lambda^\theta(e|s,X^{t,x,a}_{s-},I^{t,a}_{s-}) \hat\nu(\{s\}, de)} M^{t,x,a,\theta}_s
    \bigg]\\
    &=\E^\theta\bigg[\int_{(t,T]}\int_A\triangledown_\theta \lambda^\theta(e|s,X^{t,x,a}_{s-},I^{t,a}_{s-}) N^{t,x,a,\theta}_s\hat\nu(ds,de)\bigg].
\end{align}
Now using that the integrand $N^{t,x,a,\theta}$ is predictable, we can apply that $\lambda^\theta(e|s,X_{s-}^{t,x,a},I_{s-}^{t,a})\hat\nu(ds,de)$
is the predictable projection of $\nu$ under $\P^\theta$, to obtain
\begin{align}
    &\E^\theta\bigg[\int_{(t,T]}\int_A \triangledown_\theta \lambda^\theta(e|s,X^{t,x,a}_{s-},I^{t,a}_{s-}) N^{t,x,a,\theta}_s \hat\nu(ds,de)\bigg]\\
    &=\E^\theta\bigg[\int_{(t,T]}\int_A \triangledown_\theta (\log\lambda^\theta)(e|s,X^{t,x,a}_{s-},I^{t,a}_{s-}) N^{t,x,a,\theta}_s \nu(ds,de)\bigg].
\end{align}
Finally, together with \eqref{eq dtheta J step 2}, we obtain
\begin{align}
    \nabla_\theta J(t,x,a,\theta)
    &=\E^\theta\bigg[\int_{(t,T]}\int_A \triangledown_\theta (\log\lambda^\theta)(e|s,X^{t,x,a}_{s-},I^{t,a}_{s-})\cdot \big(M^{t,x,a,\theta}_s - N^{t,x,a,\theta}_s\big) \nu(ds,de)\bigg]\\
    &=\E^\theta\bigg[\int_{(t,T]}\int_A \triangledown_\theta (\log\lambda^\theta)(e|s,X^{t,x,a}_{s-},I^{t,a}_{s-})\\
    &\qquad\; \cdot \Big(J(s,X^{t,x,a}_s,e,\theta) - J(s,X^{t,x,a}_{s-},I^{t,a}_{s-},\theta) - c(s,X^{t,x,a}_{s-},I^{t,a}_{s-},e)\Big) \nu(ds,de)\bigg].
\end{align}
\end{proof}

We will see that we can use this to construct policy gradient (PG) steps for a diverse class of control problems in continuous time.


\subsection{Actor-critic algorithm}\label{subsection randomised actor critic algorithm}

We now aim to design an actor-critic (AC) learning algorithm for our randomised problem. AC algorithms are useful to tackle problems in environments where explicit knowledge of system dynamics is unavailable (e.g. model-free settings) and they consist out of two steps which are executed in turns: the policy evaluation (PE) step updates our reward functional estimate $J$ based on the current policy $\lambda$, and the policy gradient (PG) step updates our current policy $\lambda$ using the current estimate of $J$. This enables simultaneous learning of the optimal parameters $\kappa$ and $\theta$ for our parametrised families $(J^\kappa)_\kappa$ representing the reward functional and $(\lambda^\theta)_\theta$ for the optimal intensity control. In particular, we expect $J^\kappa$ to approximate the true value function for our control problem.

We will base the policy gradient (PG) step, on the representation \eqref{eq policy gradient equation} of the gradient, which we developed in \cref{subsection theory background}.  
To fit a model-free setting, we consider that in general we do not know the exact form of $c$, but instead that at any point in time $s\in [t,T]$, we are able to observe our cumulative reward up to the current time,
\[
R^{t,x,a}_s \coloneqq \int_t^s f(r,X^{t,x,a}_r,I^{t,a}_r) dr - \int_{(t,s]}\int_A c(r,X^{t,x,a}_{r-},I^{t,a}_{r-},e) \nu(dr,de).
\]
This enables us, by observing our accumulated reward right before and after we change our action, so $R^{t,x,a}_{\tau_n-}$ before the jump and $R^{t,x,a}_{\tau_n}$ after the jump, to compute the cost term appearing in \eqref{eq policy gradient equation} as follows
\[
c(\tau_n,X^{t,x,a}_{\tau_n-},I^{t,a}_{\tau_n-},I^{t,a}_{\tau_n}) = R^{t,x,a}_{\tau_n-} - R^{t,x,a}_{\tau_n},
\]
and thus obtain the following formula for the policy gradient,
\begin{align}
    \nabla_\theta J(t,x,a,\theta)
    &=\E^\theta\bigg[\int_{(t,T]}\int_A \triangledown_\theta (\log\lambda^\theta)(e|s,X^{t,x,a}_{s-},I^{t,a}_{s-})\\
    &\qquad \cdot \Big(J(s,X^{t,x,a}_s,e,\theta) - J(s,X^{t,x,a}_{s-},I^{t,a}_{s-},\theta) + R^{t,x,a}_s - R^{t,x,a}_{s-} \Big) \nu(ds,de)\bigg].
\end{align}

For the policy evaluation (PE) step, we can for example utilise a martingale loss function based on approach introduced in \autocite{jia_policy_2021-1}. Their approach is based on the observation that for the true value function $v^\theta$ $:=$ $J(\cdot,\theta)$ associated with a fixed policy $\lambda^\theta$, the process
\[
(v^\theta(s,X^{t,x,a}_s,I^{t,a}_s) + R^{t,x,a}_s)_{s\in [t,T]}
\]
is a martingale under $\P^\theta$,  where $(X^{t,x,a},I^{t,a})$ follow the intensity policy $\lambda^\theta$. Now using that at terminal time the value function $v^\theta$ is just the terminal reward $g$, so
\[
v^\theta(T,X^{t,x,a}_T,I^{t,a}_T) + R^{t,x,a}_T
= g(X^{t,x,a}_T,I^{t,a}_T) + R^{t,x,a}_T,
\]
we can conclude that for all $s\in [t,T]$, under $\P^\theta$,
\[
v^\theta(s,X^{t,x,a}_s,I^{t,a}_s) + R^{t,x,a}_s
= \argmin_{\substack{\xi\text{ is }\c F^{X,\nu}_s\text{-measurable}}} \E^\theta\big[|g(X^{t,x,a}_T,I^{t,a}_T) + R^{t,x,a}_T - \xi|^2\big].
\]
Since $J^\kappa$ is intended to approximate $v^\theta$, this motivates us to consider the following martingale loss for our learned reward functional $J^\kappa$,
\[
\text{ML}(J^\kappa) \coloneqq \frac 1 2 \E^\theta\bigg[\int_{(t,T]} \int_A \big|J^\kappa(s,X^{t,x,a}_s,I^{t,a}_s) + R^{t,x,a}_s - g(X^{t,x,a}_T,I^{t,a}_T) - R^{t,x,a}_T \big|^2 \nu(ds,de)\bigg].
\]

This loss, in essence, quantifies how much we deviate from the martingale characterisation above. Another possible choice for ML$(J^\kappa)$ would e.g. be $\E^\theta\big[\int_t^T \big|J^\kappa(s,X^{t,x,a}_s,I^{t,a}_s) + R^{t,x,a}_s - g(X^{t,x,a}_T,I^{t,a}_T) - R^{t,x,a}_T \big|^2 ds\big]$, which has been considered by \autocite{jia_policy_2021-1}.
To be able to learn the reward functional for a policy $\theta$, we will update our estimate $\kappa$ using the martingale loss $\text{ML}(J^\kappa)$ by computing
\begin{align}
&\nabla_\kappa \text{ML}(J^\kappa)\\
&= \E^\theta\bigg[\int_{(t,T]} \int_A  \Big(J^\kappa(s,X^{t,x,a}_s,I^{t,a}_s) + R^{t,x,a}_s - g(X^{t,x,a}_T,I^{t,a}_T) - R^{t,x,a}_T \Big) \nabla_\kappa  J^\kappa(s,X^{t,x,a}_s,I^{t,a}_s) \nu(ds,de)\bigg].
\end{align}

By combining both steps, we then obtain the following generic actor-critic algorithm for randomised control problem in a model-free setting. 

\vspace{3mm}

\begin{algorithm}[H]
    \caption{Offline-episodic actor-critic algorithm}
    \label{algorithm actor critic algorithm for randomised problems}
    \SetAlgoLined
    \DontPrintSemicolon
    \KwIn{initial state $x_0$, initial action $a_0$, parametrised family of reward functions $(J^\kappa)_\kappa$, parametrised family of randomised intensity actions $(\lambda^\theta)_\theta$, initial learning rates $\eta_\kappa,\eta_\theta$, learning schedule $l(\cdot)$}
    \KwOut{learned value function $J^\kappa$, optimal randomised control $\lambda^\theta$}

    initialise $\kappa,\theta$\;
    
    \For{episode $j=1,\dotso$}
    {
    simulate $(X_t,I_t)_{t\in [0,T]}$ starting from $(X_0, I_0) = (x_0, a_0)$ according to the policy $\lambda^\theta$ and observe the accumulated running reward $(R_t)_{t\in [0,T]}$ and the terminal reward $G_T$ $=$ $g(X_T,I_T)$\;
    compute $\nabla_\kappa \text{ML}(J^\kappa) \gets \sum_{I_{t-}\not= I_t} (J^\kappa(t,X_t,I_t) + R_t - G_T - R_T) \nabla_\kappa J^\kappa(t,X_t,I_t)$\;
    compute $\triangledown_\theta J^\kappa\gets \sum_{I_{t-}\not= I_t} (J^\kappa(t,X_t,I_t) - J^\kappa(t,X_{t-},I_{t-}) + R_t - R_{t-}) \triangledown_\theta (\log \lambda^\theta)(I_t| t,X_{t-},I_{t-})$ \;
    update $\kappa \gets \kappa - \eta_\kappa l(j) \nabla_\kappa \text{ML}(J^\kappa) $\;
    update $\theta \gets \theta + \eta_\theta l(j) \triangledown_\theta J^\kappa$\;
    }
\end{algorithm}

\vspace{3mm}
Finally, it is important to note that while this algorithm addresses the randomised problem, our primary interest lies in (non-randomised) stochastic control problems; their randomised counterparts serve as tools for handling these control problems. Specifically, our objective will in general not be to find the optimal intensity $\lambda^\theta$, but rather to find the optimal (non-randomised) control $\alpha$. Therefore, in the next \cref{section application to stochastic control problems}, we will discuss in more detail how to utilise this algorithm to find the optimal control for the corresponding stochastic control problems.


\section{Application to stochastic control problems}\label{section application to stochastic control problems}

In this section, we consider general stochastic control problems for which  a randomised formulation in the form of \cref{section unified approach} exists.  The classical problem is the case of controlled Markov processes  $X^\alpha$, e.g., driven by diffusion processes, with regular controls $\alpha$ valued in $A$, and where the objective is to maximise over $\alpha$ a criterion in the form 
\begin{align}
\mrJ(\alpha) &= \; \E \Big[ g(X_T^\alpha) + \int_0^T f(t,X_t^\alpha,\alpha_t) dt\Big].     
\end{align}
The corresponding randomised formulation is the one described in \cref{section unified approach} with $g(x)$ depending only on $x$, $c$ $\equiv$ $0$, and the key result, proved in \autocite{kharroubi_feynmankac_2015}, see  also \cite{fuhpha15}, is the statement that the two value functions coincide, namely: 
\begin{align}
\sup_\alpha \mrJ(\alpha) &= \; \sup_{\lambda \in {\cal V}} \E^\lambda \Big[ g(X_T) + \int_0^T f(t,X_t,I_t) dt\Big].    
\end{align}
Such randomised formulations have been developed for a large class of continuous time control problems, including, but not limited to,  impulse control problems in \autocite{khamaphazha10},  
optimal stopping in \autocite{fuhrman_representation_2016}, or optimal switching problems in \autocite{bouchard_stochastic_2009,elie_probabilistic_2010} as it  will be illustrated in the next section. 
The core idea behind the randomisation framework is to replace the control by a random point process, usually a Poisson point process, whose intensity becomes the new control, which results in a randomised problem in form of \cref{section unified approach}. The advantage of this randomised formulation is that it provides a unified framework for many different classes of control problems, and not only for continuous time problems but even includes stochastic control in discrete time on deterministic and/or random grids.
Our goal is then to develop an actor-critic algorithm for the original stochastic control problem by utilising its randomised counterpart and its actor-critic algorithm derived in \cref{subsection randomised actor critic algorithm}. However, the drawback is that the randomised problem is not directly equivalent to the original problem. While one can often show that the value functions of both problems coincide, it is not trivial how to recover an optimal control for the original problem just from studying the randomised formulation. In particular, it is also not clear whether the randomised formulation even has an optimal (randomised) control -- in general this will not be the case.

Let us for simplicity of presentation assume that $\hat\nu(ds,de) = \bar\nu(ds)\mu(de|s,X^{}_{s-},I^{}_{s-})$, where $\bar\nu$ and $\mu$ are both non-random. Then $\bar\nu$ describes the distribution of points in $\nu$ in time, and $\mu$ is the kernel transition probability describing the mark distributions of such points. Similarly, we split $\lambda^\theta$ into an intensity $\Lambda^\theta$ for new points and a probability density $\bar\lambda^\theta$ for their marks as follows
\[
\Lambda^\theta(s,x,a) \coloneqq \int_A \lambda^\theta(e|s,x,a) \mu(de | s,x,a),
\;\;  \bar\lambda^\theta(e|s,x,a) := \frac{\lambda^\theta(e|s,x,a)}{\Lambda^{\theta}(s,x,a)},\quad (s,x,a) \in [0,T]\times\R^d\times A.
\]

Let us further assume that $\mu(\{a\}|s,x,a) > 0$ for all $(s,x,a)\in [0,T]\times\R^d\times A$, so that at any point there is a non-negative probability that the jump of $\bar\nu$ does not induce a real jump in $I$.
Given a function $\Lambda$, we now denote by $\Theta_{\leq\Lambda}$ (resp.\@ $\Theta_{=\Lambda}$) the set of all $\theta\in\Theta$ such that $\Lambda^\theta \leq \Lambda$ (resp.\@ $\Lambda^\theta = \Lambda$). Then we note that for every $\theta\in\Theta_{\leq\Lambda}$, the process $\lambda(e|s,x,a) \coloneqq \lambda^\theta(e|s,x,a) + (\Lambda(s,x,a)-\Lambda^\theta(s,x,a)) \frac 1 {\mu(\{a\}|s,x,a)} \in\c V$ emulates the control $\lambda^\theta$ in the sense that $\P^\lambda_{(X,I)} = \P^{\theta}_{(X,I)}$. Therefore, supposing that $(\lambda_\theta)_{\theta\in\Theta}$ is sufficiently dense in $\c V$, we see that $\lambda$ can be approximated by $(\lambda^{\theta_n})_n$ such that $\theta_n\in \Theta_{=\Lambda}$, which shows that
\begin{align}\label{eq J over Theta leq eq Lambda}
\sup_{\theta\in\Theta_{\leq\Lambda}} J(t,x,a,\theta) = \sup_{\theta\in\Theta_{=\Lambda}} J(t,x,a,\theta).
\end{align}
This motivates us to view $\Lambda$ as some kind of inverse step size, since as $\Lambda\to\infty$ (resp. $\Lambda\bar\nu(\{s\})\uparrow 1$ if $\bar\nu(\{s\}) > 0$), we see that
\[
\sup_{\theta\in\Theta_{=\Lambda}} J(t,x,a,\theta) \to 
\sup_{\theta\in\Theta} J(t,x,a,\theta).
\]
Thus, we introduce the following restricted parameter sets for $\theta$, for $\Lambda_c\geq 0$ and $0\leq \Lambda_d\leq 1$,
\[
\Theta_{\Lambda_c,\Lambda_d}
= \big\{\theta \in\Theta \;\big|\; \Lambda^\theta(s,x,a)
= \Lambda_c \1_{\{\bar\nu(\{s\}) = 0\}} + \tfrac{\Lambda_d}{\bar\nu(\{s\})} \1_{\{\bar\nu(\{s\}) > 0\}}, \text{ for all }(s,x,a)\big\}
\subseteq \Theta.
\]

Then, $\sup_{\theta\in\Theta_{\Lambda_c,\Lambda_d}} J \to \sup_{\theta\in\Theta} J$ as long as $\Lambda_c\to\infty$ and $\Lambda_d \uparrow 1$.
Further, since while optimising over $\Theta_{\Lambda_c,\Lambda_d}$, the intensity is fixed, it is equivalent to optimise instead over the probability densities $(\bar\lambda_\theta)_{\theta\in\Theta_{\Lambda_c,\Lambda_d}}$, which by construction satisfy $\int_A \bar\lambda^\theta(e|\cdot) \mu(de|\cdot) \equiv 1$ and $\bar\lambda^\theta\geq 0$. Note that this family, if $\Theta$ is sufficiently exhaustive, does not actually depend on $\Lambda_c,\Lambda_d$ anymore.
Thus, by imposing an intensity schedule $\Lambda_c$ and $\Lambda_d$ ensuring that $\Lambda_c\to\infty$ and $\Lambda_d\uparrow 1$, we obtain \cref{algorithm actor critic algorithm with random grids and intensity schedule}.

At the same time, we recall that solving the randomised problem was however not our original goal. Instead, it serves as a tool for solving the original (non-randomised) problem. In particular, the $\bar\lambda^\theta$ we obtain from \cref{algorithm actor critic algorithm for randomised problems} is not our desired control.
We recall that instead $\bar\lambda^\theta$ represents the intensity for $I^{}$, and the process $I^{}$ then actually plays the role of the sought-after control process $\alpha$. Therefore, let us define for each $\theta$ also a control $\alpha^\theta$ as e.g.\@ the $\argmax$ of the distribution $\bar\lambda^\theta(e | s,x,a) \mu(de | s,x,a)$. The motivation is that at each jump $s$ of $I^{}$, we draw our new control $I^{}_s$ from the distribution $\bar\lambda^\theta(e | s, X^{}_{s-},I^{}_{s-}) \mu(de | s,X^{}_{s-},I^{}_{s-})$, and if the intensity $\Lambda^\theta\to\infty$ (resp. $\Lambda^\theta\bar\nu(\{s\}) \uparrow 1$ if $\bar\nu(\{s\}) > 0$), then we are essentially able to draw a new control at every time point $s\in [t,T]$, just as in the original control problem. Consequently, letting in our case $\Lambda_c\to\infty$ and $\Lambda_d\uparrow 1$,
this then leads to the convergence of $\alpha^\theta\to\alpha_*$.

\vspace{3mm}

\begin{algorithm}[H]
    \caption{\footnotesize{Offline-episodic actor-critic algorithm with random grids and intensity schedule}}
    \label{algorithm actor critic algorithm with random grids and intensity schedule}
    \SetAlgoLined
    \DontPrintSemicolon
    \KwIn{initial state $x_0$, initial action $a_0$, terminal time $T$, parametrised family of reward functions $(J^\kappa)_\kappa$, baseline random grid sampling distribution $\bar\nu$, baseline action distribution kernel $\mu$, parametrised family of action densities $(\bar\lambda^\theta)_\theta$, intensity schedule $\Lambda_c(\cdot)$, $\Lambda_d(\cdot)$, initial learning rates $\eta_\kappa,\eta_\theta$, learning schedule $l(\cdot)$}
    \KwOut{learned value function $J^\kappa$, optimal randomised control $\lambda^\theta$, optimal (non-randomised) control $\alpha^\theta$}

    initialise $\kappa,\theta$\;
    
    \For{episode $j=1,\dotso$}
    {
    initialise $\tau_0\gets 0$, $r_0\gets 0$\;
    simulate point process $U$ on $(0,T]$ with stochastic intensity $\Lambda_c(j)\1_{\{\bar\nu(\{s\})=0\}} \bar\nu(ds) + \Lambda_d(j) \1_{\{\bar\nu(\{s\}) > 0\}} \delta_s(ds)$
    $\to$ obtain grid points $(\tau_n)_{n=1,\dotso,N}$\; 
    \For{$n=1,...,N$}{
        simulate $X_{[\tau_{n-1},\tau_n)}$ from $X_{\tau_{n-1}} = x_{n-1}$ with control $a_{n-1}$\;
        observe the new state $x_{n-} \gets X_{\tau_n-}$ and accumulated running reward $r_{n-} \gets R_{\tau_n-}$ at time $\tau_n-$\;
        simulate and update the new control $a_n \sim \bar\lambda^\theta(e|\tau_n,x_{n-},a_{n-1})\mu(de | \tau_n,x_{n-},a_{n-1})$\;
        observe the new state $x_n \gets X_{\tau_n}$ and accumulated running reward $r_n \gets R_{\tau_n}$ after updating the control at time $\tau_n$\;
    }
    simulate $X_{[\tau_N,T]}$ with control $a_N$\;
    observe the final state $x_{N+1}\gets X_T$ and set $a_{N+1}\gets a_N$, $\tau_{N+1}\gets T$\;
    observe the final accumulated running reward $r_{N+1}\gets R_T$ and the terminal reward $G_T$ $=$ $g(X_T,I_T)$ at time $T$\;
    compute $\nabla_\kappa \text{ML}(J^\kappa) \gets\sum_{a_n\not= a_{n-1}} (r_n + J^\kappa(\tau_n,x_n,a_n) - G_T - r_{N+1}) \triangledown_\kappa J^\kappa(\tau_n,x_n,a_n)$\;
    compute $\nabla_\theta J^\kappa\gets \sum_{a_n\not= a_{n-1}} (J^\kappa(\tau_n,x_n,a_n) - J^\kappa(\tau_n,x_{n-},a_{n-1}) + r_n - r_{n-}) \triangledown_\theta (\log \bar\lambda^\theta)(a_n| \tau_n,x_{n-},a_{n-1})$ \;
    update $\kappa \gets \kappa - \eta_\kappa l(j) \nabla_\kappa \text{ML}(J^\kappa)  $\;
    update $\theta \gets \theta + \eta_\theta l(j) \nabla_\theta J^\kappa$\;
    }
    obtain $\alpha^\theta(t,x,a)$ as the $\argmax$ of the probability distribution $\bar\lambda^\theta(e|t,x,a)\mu(de|t,x,a)$\;
\end{algorithm}

\vspace{3mm}

Finally, we want to conclude this section by giving a version of the actor-critic algorithm but with a \grqq fixed step size\grqq{}. This is the version we will also use later in \cref{section numerical experiments}. So we will fix the intensity $\Lambda$ and thus the base process $\tilde\nu\coloneqq \Lambda\bar\nu$, and now optimise again over the parametrised family of action densities $(\bar\lambda^\theta)_\theta$, which results in the following \cref{algorithm actor critic algorithm with random grids}.

\begin{remark}
    This leads to a flexible framework accommodating various types of sampling grids, such as
    \begin{itemize}
        \item deterministic discrete grids, by choosing $\tilde\nu = \sum_{k=0}^N \delta_{\frac {kT} N}(ds)$, for $N\geq 2$,
        \item random discrete grids, by setting $\tilde\nu = \sum_{k=0}^N p_{samp} \delta_{\frac {kT} N}(ds)$, for $N\geq 2$ and $p_{samp} \in (0,1]$,
        \item Poisson grids are possible with $\tilde\nu = \lambda ds$ for some intensity $\lambda > 0$.
    \end{itemize}
    In \cref{section numerical experiments}, we will compare the choice of deterministic and random discrete grids through two numerical examples of optimal switching problems.
\end{remark}

\begin{algorithm}[H]
    \caption{Offline-episodic actor-critic algorithm with random grids}
    \label{algorithm actor critic algorithm with random grids}
    \SetAlgoLined
    \DontPrintSemicolon
    \KwIn{initial state $x_0$, initial action $a_0$, terminal time $T$, parametrised family of reward functions $(J^\kappa)_\kappa$, random grid sampling distribution $\tilde\nu$, baseline action distribution kernel $\mu$, parametrised family of action densities $(\bar\lambda^\theta)_\theta$, initial learning rates $\eta_\kappa,\eta_\theta$, learning schedule $l(\cdot)$}
    \KwOut{learned value function $J^\kappa$, optimal randomised control $\lambda^\theta$, optimal (non-randomised) control $\alpha^\theta$}

    initialise $\kappa,\theta$\;
    
    \For{episode $j=1,\dotso$}
    {
    initialise $\tau_0\gets 0$, $r_0\gets 0$\;
    simulate point process on $(0,T]$ with stochastic intensity $\tilde\nu$ $\to$ obtain grid points $(\tau_n)_{n=1,\dotso,N}$\;
    \For{$n=1,...,N$}{
        simulate $X_{[\tau_{n-1},\tau_n)}$ from $X_{\tau_{n-1}} = x_{n-1}$ with control $a_{n-1}$\;
        observe the new state $x_{n-} \gets X_{\tau_n-}$ and accumulated running reward $r_{n-} \gets R_{\tau_n-}$ at time $\tau_n-$\;
        simulate and update the new control $a_n \sim \bar\lambda^\theta(e|\tau_n,x_{n-},a_{n-1})\mu(de | \tau_n,x_{n-},a_{n-1})$\;
        observe the new state $x_n \gets X_{\tau_n}$ and accumulated running reward $r_n \gets R_{\tau_n}$ after updating the control at time $\tau_n$\;
    }
    simulate $X_{[\tau_N,T]}$ with control $a_N$\;
    observe the final state $x_{N+1}\gets X_T$ and set $a_{N+1}\gets a_N$, $\tau_{N+1}\gets T$\;
    observe the final accumulated running reward $r_{N+1} \gets R_T$ and the terminal reward $G_T$ $=$ $g(X_T,I_T)$ at time $T$\;
    \;
    compute $\nabla_\kappa \text{ML}(J^\kappa) \gets\sum_{a_n\not= a_{n-1}} (r_n + J^\kappa(\tau_n,x_n,a_n) - G_T - r_{N+1}) \triangledown_\kappa J^\kappa(\tau_n,x_n,a_n)$\;
    compute $\nabla_\theta J^\kappa\gets \sum_{a_n\not= a_{n-1}} (J^\kappa(\tau_n,x_n,a_n) - J^\kappa(\tau_n,x_{n-},a_{n-1}) + r_n - r_{n-}) \triangledown_\theta (\log \bar\lambda^\theta)(a_n| \tau_n,x_{n-},a_{n-1})$ \;
    update $\kappa \gets \kappa - \eta_\kappa l(j) \nabla_\kappa \text{ML}(J^\kappa)$\;
    update $\theta \gets \theta + \eta_\theta l(j) \nabla_\theta J^\kappa$\;
    }

    define $\alpha^\theta(t,x,a)$ as the $\argmax$ of the probability distribution $\bar\lambda^\theta(e|t,x,a)\mu(de|t,x,a)$\;
\end{algorithm}

\section{Numerical experiments for switching problems using neural networks}\label{section numerical experiments}

\subsection{Optimal switching problem}\label{subsection optimal switching}

In this paragraph, we recall the  connection between  optimal switching problems with their randomised formulation following \autocite{bouchard_stochastic_2009}. Note that this randomisation method has been extended to large class of further problems including impulse control, optimal stopping and regular control problems, and thus such problems also fit into the unified setting for our policy gradient method.

Let $(\Omega,\c F,\P)$ be a complete probability space carrying an $m$-dimensional Brownian motion $W$ and let $\b F^W$ be its generated filtration. Let $A = \{1,\dots,N\}$ for some fixed $N\in\N$ denote the finite action set. A switching control is an $A$-valued piece-wise constant process of the form
\[
\alpha = a \1_{[t,\tau_0)} + \sum_{n\in\N} \xi_n \1_{[\tau_n,\tau_{n+1})},
\]
where $(\tau_n)_n$ is an increasing sequence of stopping times such that $\tau_n\to\infty$ $\P$-a.s., $(\xi_n)_n$ is a sequence of $A$-valued random variables such that $\xi_n$ is $\c F^W_{\tau_n}$-measurable and $a\in A$ is a fixed initial control.
Given an initial value $x\in\R^d$, we further consider the controlled state dynamics as the solution to the stochastic differential equation
\begin{align}
\label{eq optimal switching state dynamics}
    X^{t,x,\alpha}_s = x + \int_t^s b(r,X^{t,x,\alpha}_r,\alpha_r) dr + \int_t^s \sigma(r,X^{t,x,\alpha}_r,\alpha_r) dW_r + \sum_{t < \tau_n \leq s} \gamma(\tau_n,X^{t,x,\alpha}_{\tau_n-},\alpha_{\tau_n-},\alpha_{\tau_n}).
\end{align}
We denote by $\c A$ as the set of as all switching controls $\alpha$.
Note that together with the following \cref{assumptions control problem state dynamics}, this ensures that the above state dynamics \eqref{eq optimal switching state dynamics} are well-defined.

\begin{assumption}\label{assumptions control problem state dynamics}
The coefficients $b$, $\sigma$ and $\gamma$ are Lipschitz and of linear growth w.r.t. $x$: there exists a positive constant $C$ s.t. for all $t$ $\in$ $[0,T]$, $x,x'$ $\in$ $\R^d$, $a,a'$ $\in$ $A$, 
\begin{align}
|b(t,x,a) - b(t,x',a)| + |\sigma(t,x,a) - \sigma(t,x',a)| + 
|\gamma(t,x,a,a') - \gamma(t,x',a,a')| & \leq C |x-x'|,\\ 
|b(t,x,a)| + |\sigma(t,x,a)| + |\gamma(t,x,a,a')| & \leq C(1 + |x|). 
\end{align}
\end{assumption}

Our goal is now to maximise the following reward functional
\[
\mrJ(t,x,a,\alpha) \coloneqq \E\bigg[ g(X^{t,x,\alpha}_T,\alpha_T) + \int_t^T f(s,X^{t,x,\alpha}_s,\alpha_s) ds - \sum_{t < \tau_n \leq T} c(\tau_n,X^{t,x,\alpha}_{\tau_n-},\alpha_{\tau_n-},\alpha_{\tau_n}) \bigg],
\]
and we define the value function as follows
\[
V(t,x,a) \coloneqq \sup_{\alpha\in\c A} \mrJ(t,x,a,\alpha).
\]

We make the standard assumptions on the gain and cost functions: 

\begin{assumption}\label{assumptions control problem reward functional}
The reward functions $f$, $g$ and the cost function are continuous w.r.t. the $x$ argument with quadratic growth condition: there exists some positive constant $C$ s.t. for all $t$ $\in$ $[0,T]$, $x$ $\in$ $\R^d$, $a,a'$ $\in$ $A$, 
\begin{align}
|f(t,x,a)| + |g(x,a)| + |c(t,x,a,a')| & \leq C(1 + |x|^2). 
\end{align}

\end{assumption}


To formulate the randomised version of this problem as in \cref{section unified approach}, we introduce an independent random point process $\nu$ on $[0,T]\times A$ with predictable projection $\hat\nu$.
Correspondingly, we define the $A$-valued process
\[
I^{t,a}_s = a + \int_{(t,s]}\int_A (e - I^{t,a}_{r-}) \nu(dr,de),\quad s\in [t,T],
\]
which will replace our control process. In particular, the state process will follow the following uncontrolled state dynamics
\begin{align}
    X^{t,x,a}_s = x + \int_t^s b(r,X^{t,x,a}_r,I^{t,a}_r) dr + \int_t^s \sigma(r,X^{t,x,a}_r,I^{t,a}_r) dW_r + \int_{(t,s]}\int_A \gamma(r,X^{t,x,a}_{r-},I^{t,a}_{r-},e) \nu(dr,de).
\end{align}
Our set of control will instead now be the set $\c V$ of $\b F^{W,\nu}\otimes\c B(A)$-predictable, essentially bounded  processes $\lambda$ such that there exists a with respect to $\P$ absolutely continuous probability measure $\P^\lambda\ll \P$ under which $\nu$ is a random point process with predictable projection $\lambda_s(e)\hat\nu(ds,de)$, see also \eqref{eq girsanov formula} for a characterisation of such $\lambda\in\c V$.
Then the reward functional is defined by
\[
J(t,x,a,\lambda) \coloneqq \E^{\P^\lambda} \bigg[ g(X^{t,x,a}_T,I^{t,a}_T) + \int_t^T f(s,X^{t,x,a}_s,I^{t,a}_s) ds - \int_{(t,T]} c(s,X^{t,x,a}_{s-},I^{t,a}_{s-},e) \nu(ds,de) \bigg],
\]
and we introduce the following randomised value function
\[
V^{\c R}(t,x,a) \coloneqq \sup_{\lambda\in\c V} J(t,x,a,\lambda).
\]

\textcite{bouchard_stochastic_2009} studied the case where $\nu$ is a Poisson point process with compensator $\hat\nu(ds,de) = \sum_{a\in A} \delta_a(de) ds$, for which the set of admissible randomised controls $\c V$ then reduces to all $\b F^{W,\nu}\otimes\c B(A)$-predictable, essentially bounded processes $\lambda$. Under some additional regularity and growth assumptions, it is proved the following equivalence result between the optimal switching and the randomised problem.

\begin{theorem}[{\autocite[Theorem 2.1]{bouchard_stochastic_2009}}]
    Let \cref{assumptions control problem reward functional,assumptions control problem state dynamics} hold and $\nu$ have a predictable projection of the form $\hat\nu(ds,de) = \sum_{a\in A} \delta_a(de) ds$.
    Further assume that the regularity assumptions \autocite[Assumptions H1, H2]{bouchard_stochastic_2009} are satisfied.
    Then value functions of both problems coincide, that is $V = V^{\c R}$.
\end{theorem}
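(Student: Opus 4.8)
The statement is an equivalence of two value functions, so the plan is to prove the two inequalities $V^{\c R} \geq V$ and $V^{\c R} \leq V$ separately; the substance is that the randomisation procedure neither creates nor destroys value, despite enlarging the filtration from $\b F^W$ to $\b F^{W,\nu}$. Throughout I would first reduce to feedback (Markovian) controls on both sides: the optimal switching problem admits a Markovian optimiser under \cref{assumptions control problem state dynamics,assumptions control problem reward functional} together with the regularity hypotheses H1, H2, while the admissible intensities in $\c V$ are themselves of Markovian form $\lambda(e\,|\,s,x,a)$, so that the two optimisation classes can be matched.

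For the inequality $V^{\c R} \geq V$, I would argue by \emph{emulation}. Given a Markovian switching control $\alpha$, I would construct a sequence of admissible intensities $(\lambda^n)_n \subseteq \c V$ that concentrate their mass on the mode prescribed by $\alpha$ and whose total intensity is driven to infinity on the continuous part (with the atomic mass pushed up to the normalisation bound $1$ at atoms of $\hat\nu$). Under $\P^{\lambda^n}$ the jump process $I$ then reaches the targeted mode almost immediately after each prescribed switching time, so that $(X^{t,x,a},I^{t,a})$ under $\P^{\lambda^n}$ converges in law to $(X^{t,x,\alpha},\alpha)$. Invoking the polynomial growth of $f,g,c$ and the uniform moment bound on $\sup_s |X_s|^p$ (valid under both $\P$ and each $\P^{\lambda^n}$), dominated convergence gives $J(t,x,a,\lambda^n) \to \mrJ(t,x,a,\alpha)$, hence $V^{\c R}(t,x,a) \geq \mrJ(t,x,a,\alpha)$; taking the supremum over $\alpha$ yields $V^{\c R} \geq V$.

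The reverse inequality $V^{\c R} \leq V$ is the hard direction, and here the emulation picture is abandoned in favour of a dynamic-programming/PDE characterisation. First I would establish a dynamic programming principle for $V^{\c R}$ over the dominated family $(\P^\lambda)_{\lambda\in\c V}$, using the Markov property of $(X,I)$ and the martingale structure recorded in \cref{remark markov property of reward process}. The constraint defining $\c V$, namely $\int_A \lambda_s(e)\,\hat\nu(\{s\},de)\leq 1$ with equality forced at full atoms, is exactly what encodes the switching obstacle: maximising the compensated jump contribution to the generator over such $\lambda$ reproduces the intervention term $\c M^a V^{\c R}$, where $\c M^a\phi(t,x) = \max_{a'\in A}\big(\phi(t,x+\gamma(t,x,a,a'),a') - c(t,x,a,a')\big)$. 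I would then show that $V^{\c R}$ is a viscosity solution of the system of variational inequalities
\begin{align}
\min\Big\{ -\partial_t V^{\c R}(\cdot,a) - \c{L}^a V^{\c R}(\cdot,a) - f(\cdot,a),\; V^{\c R}(\cdot,a) - \c M^a V^{\c R}\Big\} = 0, \qquad V^{\c R}(T,\cdot,a) = g(\cdot,a),
\end{align}
with $\c{L}^a$ the second-order generator of the diffusion in mode $a$; this is precisely the HJB system characterising $V$. A comparison/uniqueness theorem for this system (available under the standing assumptions and H1, H2) then forces $V^{\c R} = V$ and closes both inequalities at once. Equivalently, and perhaps more robustly, I would represent $V^{\c R}$ as the minimal solution of a BSDE with nonpositive jumps, whose jump constraint is the probabilistic incarnation of the obstacle and which is known to represent $V$.

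The crux, and the step I expect to dominate the work, is the rigorous derivation of this obstacle term from the intensity constraint: one must verify that the supremum over admissible $\lambda$ of the compensated jump term, subject to $\int_A\lambda\,\hat\nu\leq 1$, yields exactly $\c M^a V^{\c R}$ and is approximated within $\c V$, so that no time-regularity of $V^{\c R}$ is needed. The last point matters because $V^{\c R}$ may be discontinuous at atoms of $\hat\nu$, so the argument must be run in a viscosity (or constrained-BSDE) sense that tolerates such discontinuities. Together with the moment and growth estimates feeding the limiting arguments in the easy direction and the bookkeeping attached to the filtration enlargement, this is where essentially all of the technical effort is concentrated.
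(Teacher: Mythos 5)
First, note that the paper you are reading does not prove this statement at all: the theorem is imported verbatim from \autocite[Theorem 2.1]{bouchard_stochastic_2009}, and the hypotheses H1, H2 are carried along precisely because the proof there hinges on them. So the relevant comparison is with the cited proof, and your sketch reconstructs its architecture faithfully: the value functions are characterised as (possibly discontinuous) viscosity solutions of the same coupled system of variational inequalities, with the obstacle $V(\cdot,a) - \c M^a V \geq 0$ emerging exactly by the mechanism you identify --- letting the bounded intensity blow up in a given direction $e$, the compensated jump contribution $\lambda\big(V(s,x+\gamma(s,x,a,e),e) - V(s,x,a) - c(s,x,a,e)\big)$ forces the obstacle in the limit --- and H1--H2 are precisely the conditions under which the comparison/uniqueness argument for this system runs, closing both inequalities at once. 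Your alternative via BSDEs with nonpositive jumps is the route of \autocite{elie_probabilistic_2010,kharroubi_feynmankac_2015}, also cited by the paper, so both mechanisms you propose are the ones actually used in the literature this theorem rests on. Your identification of the crux (extracting the intervention operator from the intensity constraint without any time-regularity of $V^{\c R}$) is accurate.

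Two caveats. First, your reduction to a Markovian optimiser for the switching problem in the emulation direction is unjustified as stated --- existence of optimal feedback controls does not follow from \cref{assumptions control problem state dynamics,assumptions control problem reward functional} alone --- but it is also dispensable: once comparison holds, the viscosity characterisation yields $V = V^{\c R}$ without a separate emulation step (as you concede yourself), or one can run the emulation on $\varepsilon$-optimal controls with finitely many switches produced by the dynamic programming principle, avoiding any existence claim. Second, the bookkeeping you devote to atoms of $\hat\nu$ (the normalisation bound $1$ at atoms, discontinuities of $V^{\c R}$ at deterministic times) is vacuous for this particular statement: here $\hat\nu(ds,de) = \sum_{a\in A}\delta_a(de)\,ds$ has no atoms in time, and, as the paper observes immediately before the theorem, $\c V$ then collapses to all essentially bounded predictable $\lambda$; the atom machinery belongs to the general framework of \cref{section unified approach}, not to this equivalence result. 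Neither point invalidates the plan; with the first repaired as indicated, your outline matches the cited proof.
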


\begin{remark}
While usually $\nu$ is chosen as Poisson point process, any \emph{sufficiently dense} point measure, under suitable assumptions, would work for such an equivalence result.
In particular, starting from any given point process, even with deterministic atoms as in the case of a deterministic grid, and by e.g. adding additional points sampled from a Poisson point process, one would obtain such a \emph{sufficiently dense} point measure.
\end{remark}

\vspace{3mm}

In the sequel for our numerical experiments, we consider an optimal switching problem where part of the state which is not controlled is continuous, while the other part is controlled and  takes discrete values. 
Therefore the randomised controlled state is modeled by a discrete Markov chain described by  probability transitions which are functions of the global  state. At convergence, we expect that these probabilities converge either to 1 or 0,  are  discontinuous in time for a fixed state, so that   they cannot be represented as functions of time using neural networks. 
Consequently,  we use a deterministic uniform grid  of $N$ dates  on $[0,T]$. We note $t_n = n  \Delta t$  where  $\Delta t = \frac{T}{\bar N}$ with $\bar N = N-1$ the number of time steps. At each time step  and each possible state, a neural network is used to describe the transition probabilities from one state to the other on.

We then propose :
    \begin{itemize}
        \item either to sample time randomly on the deterministic grid :  the  number of  points  $\tilde N$ chosen on the grid is sampled using a binomial distribution with a probability $p_{samp}$ and a number of trials equal to $N-2$. Then the points from the grid are chosen randomly with an uniform law,
        \item or to take the $N$ points grid corresponding to  $p_{samp}=1$.
    \end{itemize}
    We denote by  $(\tau_k)_{k \ge 0}$ the random lattice sampled from the deterministic lattice  with  $\tau_0=0$,  and complete it with the convention that  $\tau_p =T$ for $k <p  \le  N$ if $\tau_k= T$. We set  $[k]$ $=$ $\frac{\tau_k}{\Delta t}$ the random grid index associated with values in $[0,N]$.

\begin{remark}
  The fact that the control has to be  modeled at each time step by different networks to get good results  for degenerated controlled states with constraints was already shown  in the case of reservoir optimization in \cite{warin2023reservoir}.  
\end{remark}

Next, depending on the problem, it may be interesting to take a representation of the reward functional $J$  different form the one proposed by $\text{ML}_\tau(J^\kappa)$ and in the different examples below we detail  the representation taken.

\vspace{1mm}

In the two examples below, we  model the energy curve
using the classical HJM model as \cite{warin2023reservoir}:
\begin{align}
    \frac{dF(t,T)}{F(t,T)} = e^{-\beta(T-t)} \sigma d W_t
    \label{eq:price}
\end{align}
where $W_t$ is a one-dimensional Brownian motion defined on a probability space $(\Omega, \mathcal{F}, \P)$. The spot price is then equal to $S_t= F(t,t)$. As numerical example, we take $T=30$ , $F(0,t)= 90+ 10 \cos( 2\pi \lfloor \frac{t}{30} \rfloor)$, 
$\beta= 0.15$, $\sigma = 0.5$.

\subsection{Starting and stopping in physical assets}
We consider the problem of a thermal asset generating power as in \cite{hamjea07},\cite{porchet2009valuation}.
The asset has a production cost of $K$  per time unit,  and has two  states : either on (state 1) or off (state 0) and 
the switching control $\alpha$ $=$ $(\alpha_t)_t$ is  
\begin{align}
\alpha_{t} & = \;  \alpha_0 \1_{[\tau_0, \tau_1)} + \sum_{n>0} \xi_n \1_{[ \tau_n,\tau_{n+1})}(t), \quad  0 <  t \leq T, 
\end{align}
 with $\alpha_0=1$ (the asset is on at $t=0$), and the random variables $(\xi_n)_n$ denote the sequence of operating regimes valued in $A$ $=$ $\{0,1\}$, representing the decisions to stop or run the production. There is a fixed cost for switching from a mode to another one, namely $c_{0,1}$  (resp. $c_{1,0}$)  for starting (resp. stopping) the production. 

The manager of the power asset aims to maximize over $\alpha$ the expected global profit: 
\begin{align}
\mrJ(\alpha) &= \; \E \Big[ \int_0^T f(S_t,\alpha_t) dt - \sum_{n} c_{\alpha_{\tau_n},\alpha_{\tau_{n+1}}} \Big], 
\end{align} 
with running profit functions $f(s,0)$ $=$ $0$, and $f(s,1)$ $=$ $ s- K$.

\begin{itemize}
    \item 
As for the control, we model the switching probability at each time step $n$ by two neural networks depending on the price, and   using a sigmoid function at the output layer, 
$\bar\lambda^{\theta_{n,i}}(S)$ takes values in $[0,1]$  for $i=0,1$ with parameters $\theta_{n,i}$ and such that for 
$i \in \{0,1\}$, $\bar\lambda^{\theta_{n,i}}$ is the probability that, given the state $i$ in $t_{n}^{-}$, the asset will change to state  $1-i$ in $t_n$. Here,  
$\theta=((\theta_{n,i})_{n=1, \bar N-1})_{i=0,1}$. 
\item As for the value function $J$   we use similarly for each time step $n$   two neural networks depending on  the price $S$:  $(J^{\kappa_{n,i}}(S))_{i=0,1}$ taking values in $\R$  with parameters $\kappa_{n,i}$ where $J^{\kappa_{n,i}}$ is the value function in state $i$ for $i \in \{0,1\}$. Here, $\kappa = ((\kappa_{n,i})_{n=0, \bar N-1})_{i=0,1}$. We also take the convention $J^{\kappa_{[N],i}}=0$ for $i=0,1$.
\end{itemize}
 For this example, to estimate the reward function $J$, we propose to minimize the loss function 
 \begin{align}
   \sum_{k=0}^{  \bar N -1}  \E \Big[\Big|  J^{\kappa_{[k],\xi_k}}( S_{\tau_k}) - \sum_{n=0}^{\bar N-1} (S_{t_n}^{\tau_k}-K) \Delta t  \1_{\{ \alpha_{t_n}^{\xi_{k}}=1\}} \1_{\{t_n \ge \tau_k\}}  +  \sum_{n \ge k} c_{\alpha_{\tau_n}^{\xi_{k}}, \alpha_{ \tau_{n+1}}^{\xi_{k}}}   \1_{\{\tau_{n+1} <T\}} \Big|^2   \Big]
     \label{eq:MLEq}
 \end{align}
 with the convention $c_{i,i}=0$, $i=0,1$ and where for $k=0, \ldots, \bar N-1$, $\xi_k$ is the state regime with values in $\{0,1\}$ which is sampled uniformly. $\alpha^{\xi_{k}}_t$ for $t \ge \tau_k$ denote the regimes  sampled  randomly using probabilities $(\bar\lambda^{\theta_{[l],\alpha^{\xi_{k}}_{\tau_{l-1}}}}(S_{\tau_{l}}))_{l > k}$  starting  at date $\tau_k$ with a value $\xi_k$, while $S_{t_n}^{\tau_k}$ is the asset value in $t_n$ obtained by sampling from its initial distribution in $\tau_k$ according to the asset law.

 The gradient function $\nabla_\theta J^\kappa$ is estimated locally for each time step and the sum of the local gradients $DW( \theta)$ is used
\begin{align}
     DW( \theta) =  \sum_{n=0}^{\bar N-2}  \E[   \nabla_\theta\log(\bar\lambda^{\theta_{n+1,\xi_n}}(S_{t_{n+1}}))  
(J^{\kappa_{n+1,\alpha_{t_{n+1}}^{\xi_n}}}(S_{t_{n+1}}) - J^{\kappa_{n+1,\xi_n}}(S_{t_{n+1}})
   -  c_{\xi_n,\alpha_{t_{n+1}}^{\xi_n}})]
 \end{align}
 where once again for each $n$ in the  loop $S_{t_{n+1}}$ is sampled according the asset law at date $t_{n+1}$,  $\xi_k$  is sampled uniformly in $\{0,1\}$, and  $\alpha_{t_{n+1}}^{\xi_n}$ is sampled according the probability $\bar\lambda^{\theta_{n+1,\xi_n}}(S_{t_{n+1}})$. 

\begin{remark}
Instead of simulating the process $X$ in forward direction as in \cref{algorithm actor critic algorithm with random grids} and evaluating $\nabla_\theta J^\kappa$, we use a local version of the gradient which randomly samples the state at every possible time step. This extra randomisation of the state allows us to be sure that all states are explored and gives better results.  This kind of extra randomisation is generally used in classical actor critic methods, where the control is taken as a normal law with decreasing variance with iterations (see for example \cite{pham2023actor}).
\end{remark}

We use the ADAM algorithm with $\eta_\theta =0.00015$ , $\eta_\kappa=0.03$. As noticed in \cite{pham2023actor}, it is crucial to have $\eta_\theta << \eta_\kappa$ to have good convergence. 
The $\tanh$ activation function is taken for the activation functions in the hidden  layers, while the sigmoid activation function is used   for the output layer for the probabilities. The batch size is equal to $10000$.
The references are calculated using dynamic programming in the StOpt library \cite{gevret2018stochastic} where regression are calculated using adapted linear regression per mesh \cite{bouchard2012monte} and $30$ time steps so $N=31$ are used. In the deterministic case, we get a value of $86$, while using $\sigma=0.15$, the value function is equal to $146.9$.
On \cref{fig:thermal}, we plot the convergence of the actor critic  algorithm in the stochastic case using $p_{samp}=1$. The convergence to the correct value is achieved after more than $10000$ gradient iterations.
In the graph, the  ``Function value" is obtained using the $J^\kappa$ approximation of $J$, while the  ``Gain expectation" is obtained using the gain estimate in the simulation (the controls and grids are sampled).

\begin{figure}[H]
    \centering
        \begin{minipage}[c]{.49\linewidth}
\includegraphics[width=\linewidth]{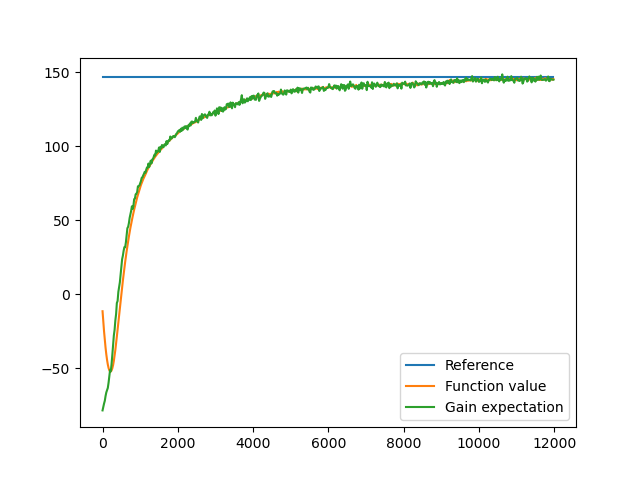}
    \caption*{$p_{samp}=1.$, $N=31$}
    \end{minipage}
    \caption{Convergence of the actor critic algorithm  for the thermal switching problem depending on the gradient iteration.}
    \label{fig:thermal}
\end{figure}

On \cref{fig:thermalsamp}, we explore the effect of extra temporal randomization. 
Not surprisingly, using a fixed lattice to sample from degrades the results as the sampling ratio $p_{samp}$ is lower. Similarly, by fixing the sampling ratio, the results improve as we increase the number of time steps of the lattice. 

\begin{figure}[H]
    \centering
        \begin{minipage}[c]{.32\linewidth}
\includegraphics[width=\linewidth]{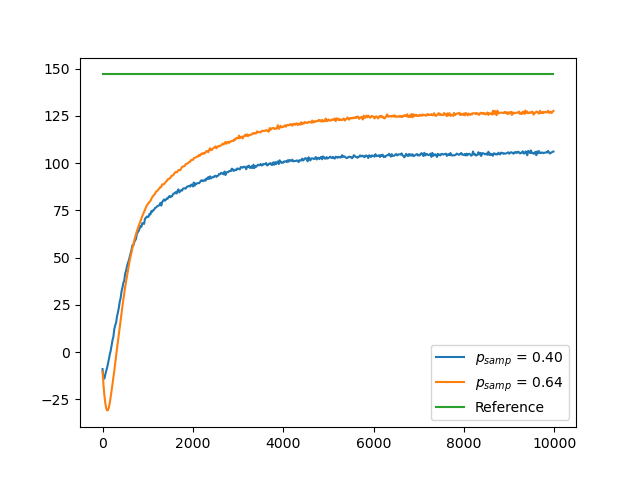}
    \caption*{$N=31$}
    \end{minipage}
           \begin{minipage}[c]{.32\linewidth}
\includegraphics[width=\linewidth]{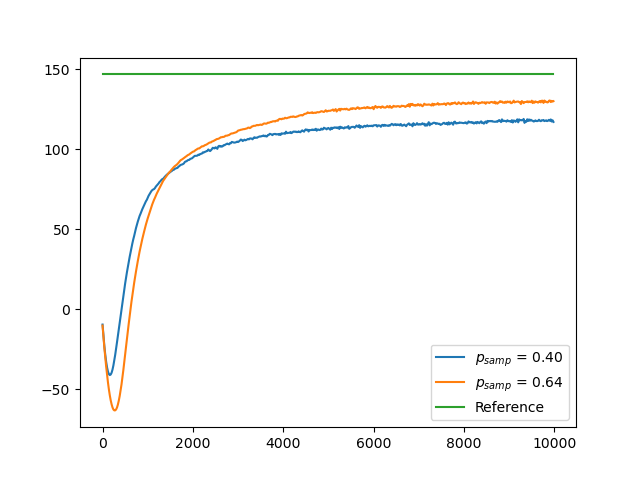}
    \caption*{$N=61$}
    \end{minipage}
            \begin{minipage}[c]{.32\linewidth}
\includegraphics[width=\linewidth]{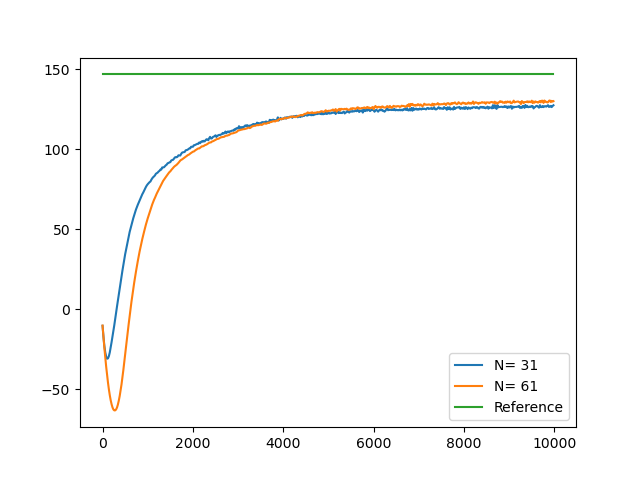}
 \caption*{$p_{samp}=0.64$}
    \end{minipage}
    \caption{Convergence of the actor critic algorithm (function value) for the thermal switching problem depending on the gradient iteration letting $p_{samp}$  or $N$ vary.}
    \label{fig:thermalsamp}
\end{figure}
It is also possible to use the estimated probability and in the simulation, using an Eulerian scheme, the control is selected as the most probable. In the stochastic case, we get a value equal to $146.0$  using $p_{samp}=1$ at the end of the iterations, while the value obtained with $N=61$, $p_{samp}=0.64$ is $139.1$.
\\

We give the control obtained in simulation (taken as the one with the highest probability) using  $p_{samp}=1$ on \cref{fig:thermalControl}.
\begin{figure}[H]
    \centering
    \begin{minipage}[c]{.49\linewidth}
    \includegraphics[width=\linewidth]{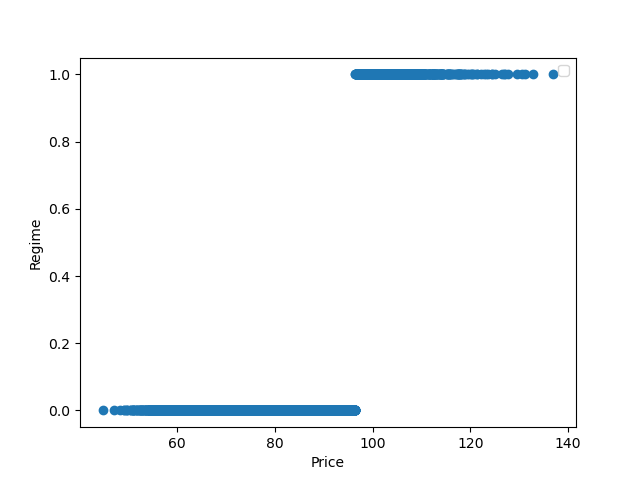}
    \caption*{$t=15$, OFF state (0)}
    \end{minipage}
       \begin{minipage}[c]{.49\linewidth}
    \includegraphics[width=\linewidth]{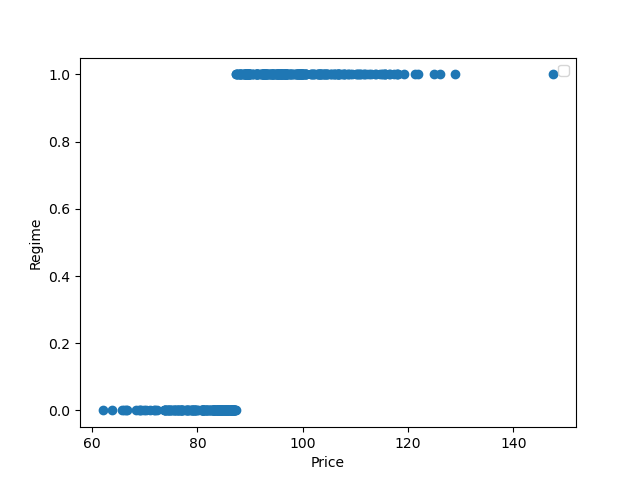}
    \caption*{$t=15$, ON state (1)}
    \end{minipage}
      \begin{minipage}[c]{.49\linewidth}
    \includegraphics[width=\linewidth]{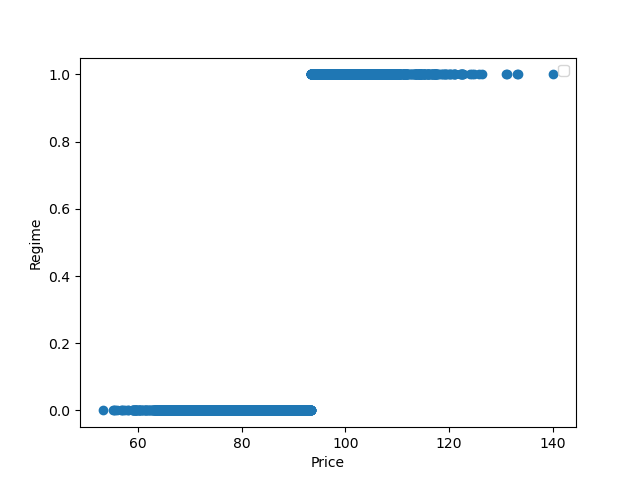}
    \caption*{$t=23$, OFF state (0)}
    \end{minipage}
       \begin{minipage}[c]{.49\linewidth}
    \includegraphics[width=\linewidth]{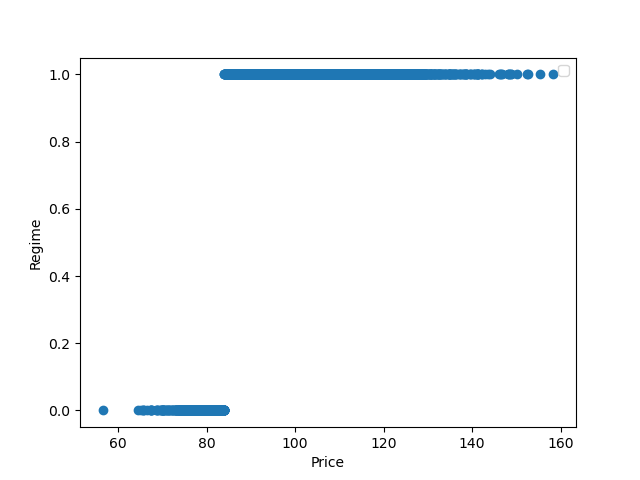}
    \caption*{$t=23$, ON state (1)}
    \end{minipage}
    \caption{Control obtained in the stochastic thermal test case depending the state for two date.}
    \label{fig:thermalControl}
\end{figure}

\subsection{A storage model}

We consider the example of a battery storage valuation formulated as an optimal switching problem, see \cite{carlud10}, \cite{warin2023reservoir}: the manager of the battery aims to price its  real options value by optimizing over 
a finite horizon the dynamic decisions to inject or withdraw power. The inventory process of the battery  is denoted by $(K_t)_t$, and is controlled by a switching control $\alpha$ $=$ $(\alpha_t)_t$: 
\begin{align}
\alpha_t & = \; \alpha_0 \1_{[\tau_0,  \tau_1)} + \sum_{n>0} \xi_n \1_{[\tau_n,\tau_{n+1})}(t), \quad  0 < t \leq T, 
\end{align}
where the random variables $(\xi_n)_n$ denote the sequence of operating regimes valued in $A$ $=$ $\{-1,0,1\}$, representing the decisions to withdraw, do nothing, or inject power.  
At $t=0$, the battery is withdrawing power  so that $\alpha_0= -1$.

The effort of moving from regime $i$ $\in$ $A$ to another regime $j$ $\in$ $A$ incurs a cost $c_{i,j}$ with $c_{i,i}$ $=$ $0$, $c_{i,j}$ $>$ $0$ for $i$ $\neq$ $j$. The inventory is given by:  $K_{t}$ $=$ $\int_0^{t}   \alpha_s ds$,   
while satisfying the physical constraint: $K_{t}$ $\in$ $[0,K_{max}]$, for all $t$ $\in$ $[0,T]$. 
Therefore the number of discrete states  is $k_{max}+1$ where $k_{max}= \frac{K_{max}}{\Delta t}$.
The exogenous price of the electricity is governed by  equation \eqref{eq:price}.
The objective of the manager is to maximize over switching control $\alpha$ the reward functional
\begin{align}
J(\alpha) &= \; \E \Big[ \int_0^T f(S_t,\alpha_t) dt  - \sum_{n} c_{\alpha_{\tau_{n}},\alpha_{\tau_{n+1}}}  \Big], 
\end{align}
with a running profit function
\begin{equation}
f(s,a)  \; = \; \begin{cases} 
 -  s , \quad \mbox{ for } a = 1 \\
 0, \quad   \mbox{ for } a = 0  \\
   s , \quad \mbox{ for } a = -1.
 \end{cases}
\end{equation}
Similarly as  in the previous example:
\begin{itemize}
    \item We model the switching probability at each time step $n$ and at each inventory $k$  by a neural network $\bar\lambda^{\theta_{n,k}}(S_{t_i})$ with parameters $\theta_{n,k}$ and with an output in $[0,1]^9$. When  injection, do nothing and withdraw are allowed (so when $k \in \{ 1, \ldots, k_{max} -1 \}$), for $ (l,m) \in \{-1,0,1\} \times  \{-1, 0, 1\}$, $\bar\lambda^{\theta_{n,k}}_{l,m}(S_{t_n})$ represents the probability  to go from state $l$ at $t_n^{-}$ to state $m$ at $t_n$. When withdraw (respectively injection)   is not allowed therefore when $k=0$ (respectively  $k= k_{max}$),  $\bar\lambda^{\theta_{n,k}}_{l,-1}$  (respectively  $\bar\lambda^{\theta_{n,k}}_{l,1}$) is set to 0. These neural networks use a softmax activation function at the  output layer to satisfy that probabilities are positive with a sum equal to 1.
    \item As for the value function $J$   we use similarly for each time step $n$ and for a level $k$  a neural network  
    $J^{\kappa_{n,k}}(S_{t_n})$ with parameters $\kappa_{n,k}$ and an output in dimension 3 where $J_i^{\kappa_{n,k}}$ is at date $t_n$  and level $k$, the value function in state  $i$ for $i \in \{-1,0,1\}$. As previously we take the convention $J_i^{\kappa_{[N],k}}=0$ for $i \in \{-1,0,1\}$ and each level $k$.
\end{itemize}

Comparing to the thermal switching asset,
\begin{itemize}
    \item the state encompass the inventory level and  we minimize the following loss function to estimate $J$:
\begin{align}
     \sum_{n=0}^{ \bar N-1} \E \Big[\Big| J_{\alpha_{\tau_n}^n}^{\kappa_{[n], K_{\tau_n}^n}}(S_{\tau_{n}}^n)  -  
     \sum_{p=0}^{\bar N-1} f(S_{t_p}^n, \alpha_{t_{p}}^n) \1_{\{t_p \ge \tau_n\}} \Delta t + \sum_{p \ge n} c_{\alpha_{\tau_{p}}^n,\alpha_{\tau_{p+1}}^n}  \1_{\{\tau_{p+1} <T\}} \Big|^2  \Big]
     \label{eq:MLEqB}
 \end{align}
 where at  date $\tau_n$ in the outer summation $S_{\tau_n}^n$ is sampled according the asset law at date $\tau_n$, while the inventory level $K_{\tau_n}^n$ and control applied $\alpha_{\tau_{n}}^n$   are sampled uniformly. $S_{t_p}^n$ is the asset value at date $t_p > \tau_n$ conditionally to its value at date $\tau_n$, and $\alpha_{t_{p}}^n$  the applied control at date $t_p$ starting from $\alpha_{\tau_{n}}^n$ at date $\tau_n$ and obtained sampling the switching probabilities as for the thermal asset. Therefore the flow equation for the inventory level is given for $t_{p} > \tau_n$ by 
 \begin{align}
 \label{eq:flow}
     K_{t_{p+1}}^n=  0 \vee (K_{t_{p}} + \alpha_{t_{p}}^n  \Delta t) \wedge K_{max}
 \end{align}
 \item The gradient function $DW$  is estimated with similar notations as
 \begin{align}
     DW( \theta) & =  \sum_{n=1}^{\bar N-1}  
     \E[\nabla_\theta\log(\bar\lambda^{\theta_{n,K_{t_{n}}}}_{\alpha_{t_{n-1}}^{n-1},\alpha_{t_{n}}^{n-1}}(S_{t_{n}}))  (J^{\kappa_{n,K_{t_{n}}}}_{\alpha_{t_{n}}^{n-1}}(S_{t_{n}}) - J^{\kappa_{n,K_{t_{n}}}}_{\alpha_{t_{n-1}}^{n-1}}(S_{t_n}) 
   -  c_{\alpha_{t_{n-1}}^{n-1},\alpha_{t_{n}}^{n-1}})]
 \end{align}
 where once again for each $n$ in the  loop,  $ S_{t_n}$ is sampled according the asset law at date $t_n$, while the inventory level $K_{t_n}$ at date $t_n$ and the control $\alpha_{t_{n-1}}^{n-1}$ at $t_n^{-}$   are sampled uniformly.
 The control $\alpha_{t_n}^{n-1}$ is sampled from the state $(t_n,K_{t_n},\alpha_{t_{n-1}}^{n-1})$ using the probabilities
 $\bar\lambda^{\theta_{n,K_{t_{n}}}}_{\alpha_{t_{n-1}}^{n-1},\alpha_{t_{n}}^{n-1}}(S_{t_{n}})$.
\end{itemize}

\begin{remark}
    We have to clip values in the flow equation \eqref{eq:flow}. A possible control for  a single  time step may be not admissible if  $\tau_{n+1}-\tau_n>1$. Then if we inject (control $\alpha = 1$), and if    the control is admissible  during one time step  and not for two,  the control is changed to $0$ on the second time step and a switching cost is added. A similar adaptation is carried out in the withdrawal regime.
\end{remark}

As numerical example, we take the following switching costs:
$c_{-1,1}= c_{1,-1} =5$, and for $(i,j)$ not in $ \{(-1,1), (1-1), (-1,-1), (0,0), (1,1) \}$ , $c_{i,j} =3$. We take $K_0 =2$ , $K_{max}=2$ and the reference calculated with the StOpt library using $30$ time steps is $264.3$. 

We use a batch size of $10000$, $\eta_\theta = 0.00015$  and $\eta_\kappa= 0.05$ with ADAM optimizers. 

\begin{figure}[H]
    \centering
    \begin{minipage}[c]{.49\linewidth}
    \includegraphics[width=\linewidth]{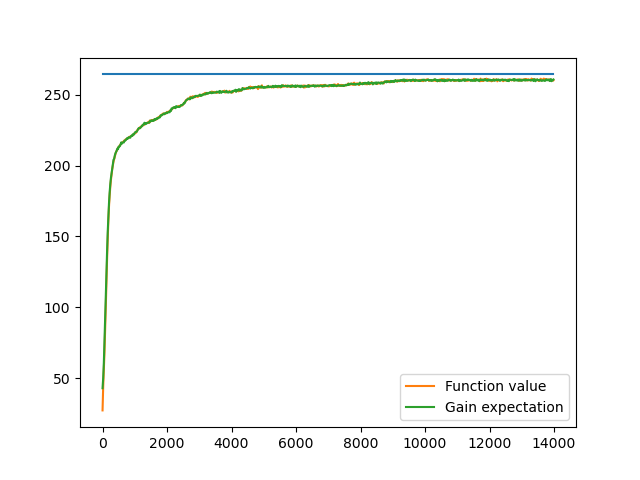}
    \caption*{$p_{samp}=1$, $N=31$.}
        \end{minipage}
       \begin{minipage}[c]{.49\linewidth}
    \includegraphics[width=\linewidth]{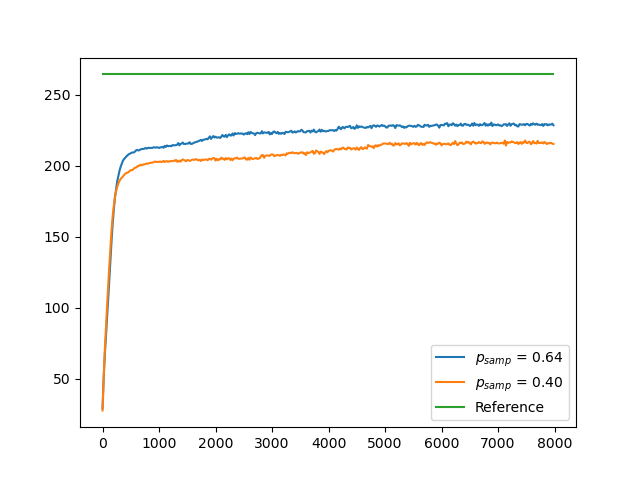}
    \caption*{Results letting $p_{samp}$ vary for $N=31$.}
    \end{minipage}
    \caption{Convergence of the actor critic algorithm for the battery storage case.}
    \label{fig:gasConv}
\end{figure}

On \cref{fig:gasConv}, we observe that taking $p_{samp}=1$ allows to recover almost the exact solution and, as for the thermal asset, the results deteriorate as $p_{samp}$ decreases.

On \cref{fig:gazControlD12,fig:gazControlD25}, we give an example of the control obtained in each regime in simulation.
At each date the control with the highest  probability is taken in simulation.

\begin{figure}[H]
    \centering
     \begin{minipage}[c]{.32\linewidth}
    \includegraphics[width=\linewidth]{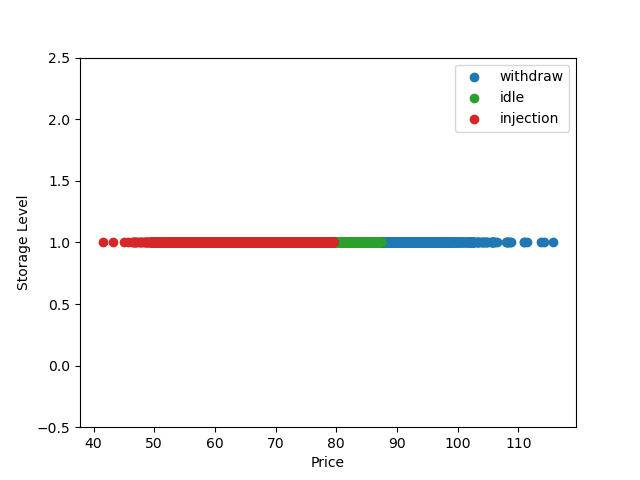}
    \caption*{Injection regime}
    \end{minipage}
       \begin{minipage}[c]{.32\linewidth}
    \includegraphics[width=\linewidth]{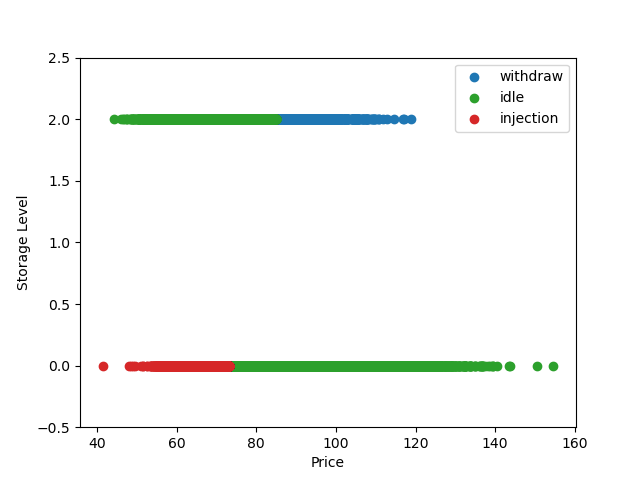}
    \caption*{Idle regime}
    \end{minipage}
      \begin{minipage}[c]{.32\linewidth}
    \includegraphics[width=\linewidth]{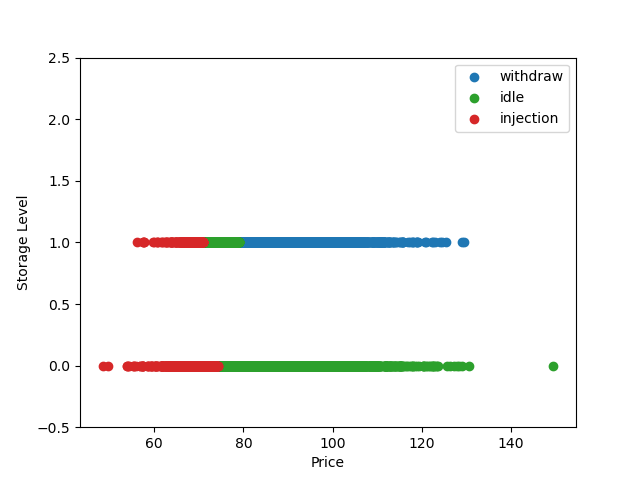}
    \caption*{Withdrawal regime}
    \end{minipage}
    \caption{Control obtained at date 12 in the battery test case depending on the regime.}
    \label{fig:gazControlD12}
\end{figure}
\begin{figure}[H]
    \centering
     \begin{minipage}[c]{.32\linewidth}
    \includegraphics[width=\linewidth]{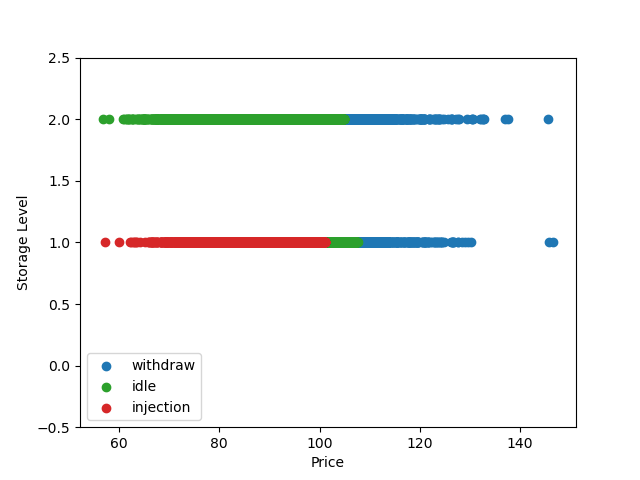}
    \caption*{Injection regime}
    \end{minipage}
       \begin{minipage}[c]{.32\linewidth}
    \includegraphics[width=\linewidth]{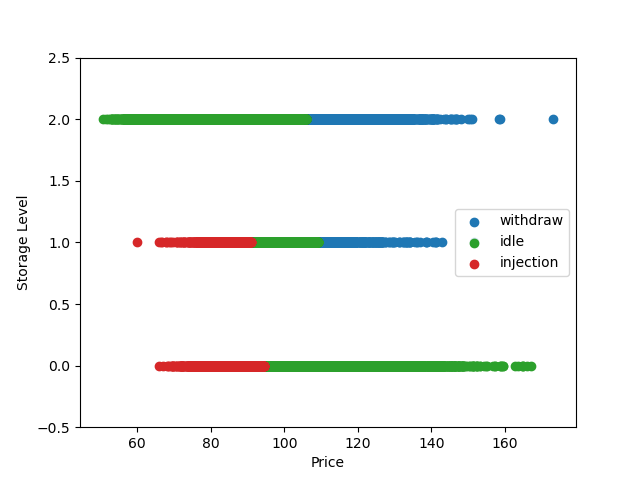}
    \caption*{Idle regime}
    \end{minipage}
      \begin{minipage}[c]{.32\linewidth}
    \includegraphics[width=\linewidth]{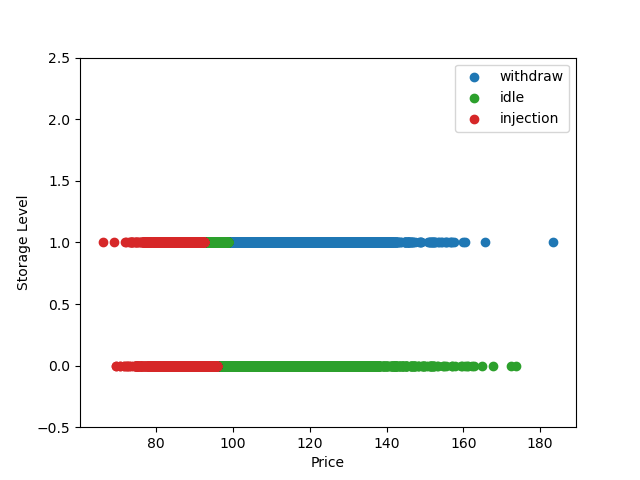}
    \caption*{Withdrawal regime}
    \end{minipage}
    \caption{Control obtained at date 25 in the battery test case depending on the regime.}
    \label{fig:gazControlD25}
\end{figure}

\printbibliography

\vspace{5mm}

\noindent \textbf{Funding}

\noindent The authors declare that no funds, grants, or other supports were received during the preparation of this manuscript.

\vspace{1mm}

\noindent \textbf{Conflict of interest}

\noindent The authors have no relevant or non-financial interests to disclose.

\end{document}